\documentclass{article}

\usepackage{arxiv}

\usepackage[utf8]{inputenc}
\usepackage[T1]{fontenc}
\usepackage{hyperref}
\usepackage{url}
\usepackage{booktabs}
\usepackage{amsfonts}
\usepackage{nicefrac}
\usepackage{microtype}
\usepackage{graphicx}
\usepackage{amsmath,amssymb,amsthm,mathtools}
\usepackage{enumitem}
\usepackage{subcaption}
\usepackage{xcolor}
\usepackage[numbers]{natbib}

\usepackage{setspace}
\setlist{nosep,leftmargin=*}

\newtheorem{theorem}{Theorem}[section]
\newtheorem{lemma}[theorem]{Lemma}
\newtheorem{proposition}[theorem]{Proposition}
\newtheorem{corollary}[theorem]{Corollary}
\newtheorem{definition}[theorem]{Definition}
\newtheorem{remark}[theorem]{Remark}

\newenvironment{pf}{\par\noindent\textbf{Proof.}\ }{\par}

\usepackage{float}
\usepackage{placeins}

\begin{document}

\title{A Unified Spectral Framework for Weighted Adjacency-Degree Matrices of Dendrimer and Bethe Trees}

\author{
 Metrose Metsidik \\
  College of Mathematical Sciences\\
  Xinjiang Normal University\\
  Urumqi 830017, China \\
  \texttt{metrose@xjnu.edu.cn} \\
}

\maketitle

\begin{abstract}
We introduce a weighted adjacency-degree matrix \(A_{fg}(G)\) with edge weights \(f(d_i,d_j)\) and diagonal entries \(g(d_i)\), unifying adjacency, Laplacian, signless Laplacian, \(A_\alpha\), ABC, Randi\'c, Sombor and related matrices. For dendrimer trees \(D_{n,k}\) and Bethe trees \(B_{n,k}\), the characteristic polynomial is factorized through one recursive sequence \(P_{fg,n}\). When \(f^2(1,k)=f^2(k,k)\), this sequence admits a Chebyshev reduction to
\(
U_{j+1}(x)+\delta U_j(x)=0,
\)
covering \(L/L^+\) and \(A_\alpha\); explicit cosine spectra occur only in special cases, such as the adjacency matrix and the \(L/L^+\) end factor of \(B_{n,k}\). We derive positive-semidefinite energy formulas, Gershgorin criteria, spectral-gap estimates, interlacing and non-interlacing results, partial eigenvalue-coincidence information, and McClelland- and Koolen--Moulton-type bounds. Several known results are recovered as special cases of this unified framework.
\end{abstract}

\noindent\textbf{Keywords.} weighted adjacency-degree matrix; dendrimer trees; Bethe trees; Chebyshev polynomials; energy; spectral gap; interlacing

\section{Introduction}

The energy of a graph \(G\), defined as the sum of the absolute values of the eigenvalues of its adjacency matrix, was introduced by Gutman in 1978~\cite{Gutman1978} and has since become a central concept in mathematical chemistry and spectral graph theory. The early motivation came from the Hückel molecular orbital approximation, where the total \(\pi\)-electron energy of a conjugated hydrocarbon is expressed in terms of the eigenvalues of the molecular graph~\cite{Gutman1978}. After a latent period of about two decades, research on graph energy expanded rapidly; by 2017, more than 600 publications had been devoted to various graph energies~\cite{GutmanFurtula2017}. A comprehensive account of the main results, proof techniques, and open problems can be found in the monograph by Li, Shi, and Gutman~\cite{LiShiGutman2012}.

Parallel to this development, degree-based topological indices have been widely used to encode the structural properties of molecular graphs. A general degree-based index has the form
\[
TI(G)=\sum_{v_i v_j\in E(G)} f(d_i,d_j),
\]
where \(f(x,y)\) is a symmetric real function and \(d_i\) denotes the degree of vertex \(v_i\)~\cite{Gutman2013}. Well-known examples include the first and second Zagreb indices, the Randić index, the ABC index, and the Sombor index, among many others~\cite{Gutman2021Sombor}. Although a topological index maps a graph to a single number, it loses much structural information. This motivated the introduction of the \emph{weighted adjacency matrix} \(A_f(G)\), whose \((i,j)\)-entry is \(f(d_i,d_j)\) when \(v_i v_j\in E(G)\) and zero otherwise~\cite{LiWang2021}. The spectrum and energy of \(A_f(G)\) retain more information than the index itself.

The matrix \(A_f(G)\) has been studied in a unified framework under the name \emph{\(f\)-weighted adjacency matrix} or \emph{degree-based weighted adjacency matrix}~\cite{LiYang2024,Das2018}. In that framework, the edge-weight part is exactly \(f(d_i,d_j)\), while the diagonal entries are set to zero. Our matrix \(A_{fg}\) extends this by introducing an additional diagonal function \(g(d_i)\), so that matrices with non-zero diagonal entries, such as the Laplacian and \(A_\alpha\) matrices, can be treated within the same unified framework. The edge-weight part of \(A_{fg}\) coincides with \(A_f(G)\) when \(g\equiv0\).

Dendrimer trees and Bethe trees are two hierarchically structured tree families that have attracted considerable attention in spectral graph theory and mathematical chemistry. A dendrimer tree \(D_{n,k}\) is a rooted tree of \(n\) layers in which the root has \(k\) branches, each internal vertex has \(k-1\) branches, and the vertices at the \(n\)-th layer are pendant vertices. A Bethe tree \(B_{n,k}\) is a rooted tree in which every vertex, including the root, has \(k-1\) branches, and the vertices at the \(n\)-th layer are pendant vertices. Both families appear naturally in chemistry, physics, and materials science as models of hyperbranched molecules and Bethe lattices~\cite{Astruc2010}.

The spectral properties and energies of these two tree families have been extensively studied. Rojo and Robbiano~\cite{Rojo2007} derived an explicit formula for the eigenvalues of the Bethe tree \(B_{d,k}\) and obtained upper bounds for the largest eigenvalue of any tree. Robbiano, Gutman, Jiménez, and San Martín~\cite{Robbiano2008} decomposed the characteristic polynomial of the adjacency matrix of Bethe trees attached to a path and obtained lower and upper bounds for the energy. Robbiano and Trevisan~\cite{Robbiano2010} established recurrence relations for the characteristic polynomial and the Laplacian characteristic polynomial of Bethe trees, and derived a recurrence relation for the Laplacian-energy-like invariant. For dendrimer trees, Bokhary and Tabassum~\cite{Bokhary2022} obtained the characteristic polynomial and energy of \(D_{2,k}\) and \(D_{3,k}\). Xu and Yan~\cite{XuYan2022} determined almost all eigenvalues of \(D_{n,k}\) except \(n+1\) eigenvalues that are roots of the matching polynomial of a weighted path, and gave a formula for the energy. Damnjanović, Filipovski, and Stevanović~\cite{Damnjanovic2023} found a semi-factorized formula for the characteristic polynomial of balanced trees and dendrimers, determined their spectra, and computed lower and upper bound approximations for the energy. Recently, Huang and Metsidik~\cite{HuangMetsidik2024} calculated the \(A_\alpha\), Laplacian, and ABC energies of dendrimer and Bethe trees, obtaining almost all \(A_\alpha\), Laplacian, and ABC eigenvalues of dendrimer trees and all \(A_\alpha\), Laplacian, and ABC eigenvalues of Bethe trees.

Despite this progress, the existing results have been obtained one by one for specific matrices or specific tree families. A unified treatment that covers adjacency, Laplacian, signless Laplacian, \(A_\alpha\), ABC, Randić, Sombor, and other degree-based matrices simultaneously for both \(D_{n,k}\) and \(B_{n,k}\) is still lacking. Such a unification is desirable because many of these matrices share the same factorization structure and the same proof techniques, and because functional properties of the edge-weight function \(f\) and the diagonal function \(g\) can be exploited uniformly.

In this paper, we introduce a unified weighted adjacency-degree matrix \(A_{fg}\) defined by a symmetric edge-weight function \(f(x,y)\) and a diagonal function \(g(x)\). This matrix covers all the classical matrices listed above as special cases. For both \(D_{n,k}\) and \(B_{n,k}\), we derive a unified factorization of the characteristic polynomial through a single polynomial sequence \(P_{fg,n}\). Using the Chebyshev polynomials of the second kind, in several important special cases we reduce the roots to Chebyshev boundary equations via a Chebyshev reduction~\cite{Boesch1986,Kwon2025,Tamura2026}; explicit cosine formulas exist only in special cases such as the adjacency matrix and the \(L/L^{+}\) end factor of \(B_{n,k}\). We then obtain energy formulas, positive semidefiniteness criteria via Gershgorin discs, spectral gaps and interlacing, conditions for fewer distinct eigenvalues, and energy bounds. Several known results are recovered as special cases of this unified framework.

The paper is organized as follows. Section~2 introduces the weighted adjacency-degree matrix, the polynomial sequence \(P_{fg,n}\), and the level parameters of \(D_{n,k}\) and \(B_{n,k}\). Section~3 establishes the characteristic polynomial factorizations and lists the special cases. Section~4 develops the Chebyshev root analysis. Section~5 derives energy formulas and positive semidefiniteness criteria. Section~6 discusses spectral gaps and interlacing. Section~7 classifies the cases with fewer distinct eigenvalues. Section~8 gives energy bounds. Section~9 concludes the paper.

\section{Preliminaries}

\subsection{Weighted adjacency-degree matrix}

To unify the many degree-based matrices appearing in the literature, we begin by introducing a general matrix that depends on two functions: an edge-weight function \(f(x,y)\) and a diagonal function \(g(x)\). Let \(G=(V,E)\) be a graph with vertex set \(V=\{v_1,v_2,\dots,v_n\}\) and edge set \(E\). For each vertex \(v_i\), let \(d_i\) denote its degree.

\begin{definition}\label{def:weighted-adj}
Given a symmetric real-valued function \(f(x,y)\) and a real-valued function \(g(x)\), the \emph{weighted adjacency-degree matrix} of \(G\) is defined by
\[
A_{fg}(G):=
\begin{cases}
g(d_i), & i=j,\\[2pt]
f(d_i,d_j), & v_i v_j\in E(G),\\[2pt]
0, & \text{otherwise}.
\end{cases}
\]
\end{definition}

By choosing different functions \(f\) and \(g\), we obtain many classical matrices, as shown in Table~\ref{tab:special-cases}.

\begin{table}[htbp]
\centering
\caption{Special cases of the weighted adjacency-degree matrix \(A_{fg}(G)\)}
\label{tab:special-cases}
\footnotesize
\renewcommand{\arraystretch}{1.15}
\begin{tabular}{lll}
\toprule
Matrix & \(f(d_i,d_j)\) & \(g(d_i)\) \\
\midrule
Zero matrix & \(0\) & \(0\) \\
Identity matrix \(I\) & \(0\) & \(1\) \\
Degree matrix \(D\) & \(0\) & \(d_i\) \\
Adjacency matrix \(A\) & \(1\) & \(0\) \\
Laplacian matrix \(L\) & \(-1\) & \(d_i\) \\
Signless Laplacian matrix \(L^+\) or \(Q\)& \(1\) & \(d_i\) \\
Normalized Laplacian matrix & \(-\dfrac{1}{\sqrt{d_i d_j}}\) & \(1\) \\
Randić matrix & \(\dfrac{1}{\sqrt{d_i d_j}}\) & \(0\) \\
ABC matrix & \(\sqrt{\dfrac{d_i+d_j-2}{d_i d_j}}\) & \(0\) \\
Sombor matrix & \(\sqrt{d_i^2+d_j^2}\) & \(0\) \\
\(A_\alpha\) matrix & \(1-\alpha\) & \(\alpha d_i\) \\
Weighted adjacency matrix & \(f(d_i,d_j)\) & \(0\) \\
Diagonal weighted matrix & \(0\) & \(g(d_i)\) \\
\bottomrule
\end{tabular}
\end{table}

\subsection{Polynomial sequence \(P_{fg,n}\)}

To express the characteristic polynomials of \(A_{fg}(D_{n,k})\) and \(A_{fg}(B_{n,k})\) in a compact form, we introduce a sequence of polynomials that captures the recursive structure of the trees. These polynomials depend only on the values \(g(1)\), \(g(k)\), \(f(1,k)\), and \(f(k,k)\).

\begin{definition}\label{def:Pfg}
The polynomial \(P_{fg,n}\) is defined recursively by
\[
\begin{cases}
P_{fg,0}=\lambda-g(1),\\[2pt]
P_{fg,1}=\bigl[\lambda-g(k)\bigr]\bigl[\lambda-g(1)\bigr]-f^{2}(1,k)(k-1),\\[2pt]
P_{fg,n}=\bigl[\lambda-g(k)\bigr]P_{fg,n-1}-f^{2}(k,k)(k-1)P_{fg,n-2},
\quad n\ge 2.
\end{cases}
\]
\end{definition}

The next lemma solves the recurrence explicitly via its characteristic roots, and then reduces the \(L/L^{+}\) case to a Chebyshev form, which will be the key to the Chebyshev reduction~\cite{Boesch1986,Kwon2025,Tamura2026}.

\begin{lemma}\label{lem:Pfg-explicit}
Assume all parameters are real and \(k\ge1\). Let \(P_{fg,n}\) be as in Definition~\ref{def:Pfg}, and put
\[
c=f^{2}(1,k)(k-1),\quad d=f^{2}(k,k)(k-1),\quad
a,b=\frac{\lambda-g(k)\pm\sqrt{(\lambda-g(k))^{2}-4d}}{2},
\]
\[
P_{fg,0}=\lambda-g(1),\qquad
P_{fg,1}=[\lambda-g(k)][\lambda-g(1)]-c.
\]
If \(a\ne b\), then
\[
P_{fg,n}
=\frac{(\lambda-g(1))(a^{n+1}-b^{n+1})-c(a^{n}-b^{n})}{a-b}.
\]
If \(a=b\ne0\), then
\[
P_{fg,n}=\left(P_{fg,0}+\left(\frac{P_{fg,1}}{a}-P_{fg,0}\right)n\right)a^{n},
\qquad a=\frac{\lambda-g(k)}{2}.
\]
If \(a=b=0\), then \(P_{fg,n}=0\ (n\ge2)\).

For \(L/L^{+}\), \(g(1)=1\), \(g(k)=k\), \(f^{2}(1,k)=f^{2}(k,k)=1\), so \(c=d=k-1\), and for \(a\ne b\),
\[
P_{L,n}
=\frac{(\lambda-1)(a^{n+1}-b^{n+1})-(k-1)(a^{n}-b^{n})}{a-b},
\quad
a,b=\frac{\lambda-k\pm\sqrt{(\lambda-k)^{2}-4(k-1)}}{2}.
\]
With \(r=k-1\), \(s=\sqrt r\), \(x_L=(\lambda-k)/(2s)\),
\[
P_{L,n}=r^{(n+1)/2}\bigl(U_{n+1}(x_L)+sU_n(x_L)\bigr),
\]
where \(U_n\) is the Chebyshev polynomial of the second kind, defined by the recurrence
\[
U_0(x)=1,\quad U_1(x)=2x,\quad U_{n+1}(x)=2xU_n(x)-U_{n-1}(x)\ (n\ge1),
\]
and given explicitly by
\[
U_n(x)=\sum_{j=0}^{\lfloor n/2\rfloor}(-1)^j\binom{n-j}{j}(2x)^{n-2j}.
\]
\end{lemma}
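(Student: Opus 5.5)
The plan is to treat Definition~\ref{def:Pfg} as a second-order linear recurrence with constant coefficients (in \(\lambda\)) and verify each closed form by checking the two initial values and the recurrence. Put \(t=\lambda-g(k)\). By construction \(a,b\) are the roots of \(z^2-tz+d=0\), so \(a+b=t\) and \(ab=d\). For \(n\ge2\) the recurrence reads \(P_{fg,n}=(a+b)P_{fg,n-1}-ab\,P_{fg,n-2}\), and every solution has the form \(\alpha a^n+\beta b^n\) when \(a\ne b\), or \((\alpha+\beta n)a^n\) when \(a=b\).

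\textbf{Case \(a\ne b\).} Set \(W_m=(a^m-b^m)/(a-b)\). Then \(W_0=0\), \(W_1=1\), and \(W_m=(a+b)W_{m-1}-abW_{m-2}\). The claimed expression equals \((\lambda-g(1))W_{n+1}-cW_n\), which is a linear combination of two solutions of the recurrence, so it is itself a solution. It remains to check the initial values. At \(n=0\) it gives \(\lambda-g(1)=P_{fg,0}\). At \(n=1\) it gives \((\lambda-g(1))(a+b)-c=[\lambda-g(k)][\lambda-g(1)]-c=P_{fg,1}\). By uniqueness of solutions of the recurrence, the formula holds for all \(n\).

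\textbf{Case \(a=b\ne0\).} Here \(a=t/2\). Write \(P_{fg,n}=(\alpha+\beta n)a^n\). The value at \(n=0\) forces \(\alpha=P_{fg,0}\). The value at \(n=1\) gives \((P_{fg,0}+\beta)a=P_{fg,1}\), so \(\beta=P_{fg,1}/a-P_{fg,0}\). This is where \(a\ne0\) is needed.

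\textbf{Case \(a=b=0\).} Then \(t=0\) and \(d=0\), so for \(n\ge2\) the recurrence gives \(P_{fg,n}=0\cdot P_{fg,n-1}-0\cdot P_{fg,n-2}=0\).

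\textbf{The \(L/L^+\) specialization.} Substituting \(g(1)=1\), \(g(k)=k\) and \(f^2=1\) into the case \(a\ne b\) gives \(c=d=k-1=r\) and the displayed \(P_{L,n}\) directly.

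For the Chebyshev form, assume \(k\ge2\) so that \(s=\sqrt r>0\). Write \(a=sA\) and \(b=sB\). Then \(A+B=(\lambda-k)/s=2x_L\) and \(AB=1\), so \(A,B\) are the roots of \(z^2-2x_Lz+1\). The standard Binet form then gives \((A^{m}-B^{m})/(A-B)=U_{m-1}(x_L)\); this follows because both sides satisfy the same recurrence with the same starting values \(U_0=1\) and \(U_1=2x_L\). Hence \(W_m=s^{m-1}U_{m-1}(x_L)\), and
\[
P_{L,n}=(\lambda-1)s^{n}U_n(x_L)-r\,s^{n-1}U_{n-1}(x_L).
\]
The key observation is that \(\lambda-1=(\lambda-k)+r=2sx_L+s^2\). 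Substituting this and using \(r s^{n-1}=s^{n+1}\) gives
\[
s^{n+1}\bigl(2x_LU_n-U_{n-1}\bigr)+s^{n+2}U_n=s^{n+1}\bigl(U_{n+1}(x_L)+sU_n(x_L)\bigr),
\]
and \(s^{n+1}=r^{(n+1)/2}\).

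The identity was derived under \(a\ne b\), but both sides are polynomials in \(\lambda\). The left side is polynomial by Definition~\ref{def:Pfg}, and the right side is polynomial in \(x_L\), hence in \(\lambda\). So the identity extends to the finitely many \(\lambda\) with \(a=b\) by continuity.

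The main obstacle is bookkeeping rather than substance, in two places. The first is getting the index shift right between \(W_m\) and \(U_{m-1}\). The second is noticing the decomposition \(\lambda-1=2sx_L+s^2\), which is what collapses the expression into the Chebyshev three-term recurrence. I would also remark that \(k=1\) (so \(r=0\)) must be excluded for the Chebyshev form, since \(x_L\) is then undefined.
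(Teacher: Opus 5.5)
Your proof is correct and takes the route the paper indicates just before the lemma: solve the recurrence through the characteristic roots \(a,b\), then rewrite the \(L/L^{+}\) case in Chebyshev form. The paper gives no written proof, and your argument fills it in soundly: you check the initial values, extend the identity as a polynomial identity across the two values of \(\lambda\) with \(a=b\), and correctly note that the Chebyshev form needs \(k\ge2\). The explicit binomial sum for \(U_n\) is the only item you leave unverified; it is standard and follows from Pascal's rule applied to the three-term recurrence.
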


\subsection{Level parameters of \(D_{n,k}\) and \(B_{n,k}\)}

For the block elimination in the next section, we need the number of vertices at each level and the ratios between adjacent levels. For \(0\le j\le n\), let \(N_{n-j}\) and \(N_{n-j}^{\prime}\) denote the numbers of vertices at the \(j\)-th level of \(D_{n,k}\) and \(B_{n,k}\), respectively. Let
\[
m_s=\frac{N_s}{N_{s+1}},\qquad m_s^{\prime}=\frac{N_s^{\prime}}{N_{s+1}^{\prime}}.
\]
Then
\[
\begin{aligned}
N_{n-j} &=
\begin{cases}
1, & j=0,\\
k(k-1)^{j-1}, & j=1,2,\dots,n,
\end{cases}\\[4pt]
m_s &=
\begin{cases}
k, & s=n-1,\\
k-1, & s=0,1,\dots,n-2,
\end{cases}\\[4pt]
N_{n-j}-N_{n-j+1} &=
\begin{cases}
k-1, & j=1,\\
k(k-1)^{j-2}(k-2), & j=2,3,\dots,n,
\end{cases}\\[4pt]
N_{n-j}^{\prime} &= (k-1)^j,\quad j=0,1,\dots,n,\\[4pt]
m_s^{\prime} &= k-1,\quad s=0,1,\dots,n-1,\\[4pt]
N_{n-j}^{\prime} - N_{n-j+1}^{\prime} &= (k-1)^{j-1}(k-2),\quad j=1,2,\dots,n.
\end{aligned}
\]

\section{Characteristic Polynomials}

With the polynomial sequence \(P_{fg,n}\) and the level parameters at hand, we can now derive the characteristic polynomials of \(A_{fg}(D_{n,k})\) and \(A_{fg}(B_{n,k})\). The proofs use block elimination, exploiting the fact that vertices at the same level are indistinguishable. Throughout this section we assume \(n\ge2\); the case \(n=1\) can be checked directly.

\begin{theorem}\label{thm:D}
For \(n\ge2\), the characteristic polynomial of the weighted adjacency-degree matrix of \(D_{n,k}\) is
\[
\phi(A_{fg}(D_{n,k}),\lambda)
= \bigl( P_{fg,n} - f^2(k,k) P_{fg,n-2} \bigr)
P_{fg,n-1}^{k-1}
\prod_{j=0}^{n-2} P_{fg,j}^{k (k-1)^{n-j-2} (k-2)}.
\]
\end{theorem}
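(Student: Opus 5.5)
The plan is to compute \(\det(\lambda I-A_{fg}(D_{n,k}))\) by eliminating the levels one at a time, from the pendant vertices up to the root, using Schur complements. I will index levels as in Section~2, with level \(s\) containing \(N_s\) vertices: \(s=0\) is the pendant level and \(s=n\) is the root.

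\textbf{Block structure.} Order the vertices level by level, \(s=0,1,\dots,n\), and within each level group the children of a common parent consecutively. Every vertex at levels \(1,\dots,n\) has degree \(k\): the root has \(k\) children, and each internal vertex has one parent and \(k-1\) children. Pendant vertices have degree \(1\). Hence \(\lambda I-A_{fg}\) is block tridiagonal:
\begin{itemize}
\item the diagonal block at level \(0\) is \((\lambda-g(1))I_{N_0}\), and the diagonal block at every level \(s\ge1\) is \((\lambda-g(k))I_{N_s}\);
\item the off-diagonal block between levels \(s\) and \(s+1\) is \(-w_s E_s\), where \(E_s=I_{N_{s+1}}\otimes \mathbf{1}_{m_s}\) is the \(N_s\times N_{s+1}\) incidence matrix (child to parent);
\item \(w_0=f(1,k)\) and \(w_s=f(k,k)\) for \(s\ge1\).
\end{itemize}
The key identity is \(E_s^{T}E_s=m_sI_{N_{s+1}}\).

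\textbf{Inductive elimination.} I will prove by induction on \(s\) that after eliminating levels \(0,\dots,s-1\), the updated diagonal block at level \(s\) is \(\frac{P_{fg,s}}{P_{fg,s-1}}I_{N_s}\), with the convention \(P_{fg,-1}:=1\).
\begin{itemize}
\item For \(s=0\) this is \(P_{fg,0}=\lambda-g(1)\).
\item Because the current block is a scalar multiple of the identity, eliminating level \(s\) subtracts \(w_s^2\frac{P_{fg,s-1}}{P_{fg,s}}E_s^TE_s=w_s^2m_s\frac{P_{fg,s-1}}{P_{fg,s}}I\) from the level-\(s+1\) block. This keeps the scalar form.
\item For \(s=0\) the update gives \(\lambda-g(k)-f^2(1,k)(k-1)/P_{fg,0}=P_{fg,1}/P_{fg,0}\).
\item For \(1\le s\le n-2\) the update gives \(\bigl[(\lambda-g(k))P_{fg,s}-f^2(k,k)(k-1)P_{fg,s-1}\bigr]/P_{fg,s}=P_{fg,s+1}/P_{fg,s}\). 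Both steps use \(m_s=k-1\) and Definition~\ref{def:Pfg}.
\item At the root the step is different, because \(m_{n-1}=k\). The root entry becomes \(\lambda-g(k)-kf^2(k,k)P_{fg,n-2}/P_{fg,n-1}\). Splitting \(k=(k-1)+1\) and using the recurrence, this equals \(\bigl(P_{fg,n}-f^2(k,k)P_{fg,n-2}\bigr)/P_{fg,n-1}\).
\end{itemize}

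\textbf{Telescoping the determinant.} The determinant is the product of the eliminated pivot blocks:
\[
\prod_{s=0}^{n-1}\Bigl(\tfrac{P_{fg,s}}{P_{fg,s-1}}\Bigr)^{N_s}\cdot\frac{P_{fg,n}-f^2(k,k)P_{fg,n-2}}{P_{fg,n-1}}.
\]
In this product:
\begin{itemize}
\item \(P_{fg,j}\) for \(0\le j\le n-2\) appears with exponent \(N_j-N_{j+1}=k(k-1)^{n-j-2}(k-2)\), by the level-parameter table;
\item \(P_{fg,n-1}\) appears with exponent \(N_{n-1}-1=k-1\).
\end{itemize}
This is exactly the claimed formula.

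\textbf{Main obstacle: justifying division by \(P_{fg,s}\).} The elimination divides by \(P_{fg,s}\), which may vanish at some \(\lambda\). I will handle this by working over the field of rational functions \(\mathbb{R}(\lambda)\), where each \(P_{fg,s}\) is nonzero: it is monic of degree \(s+1\). There the Schur complement computation is valid, so the two sides agree as rational functions. The left side is a polynomial, so the right side, whose total degree telescopes to \(N_0+\dots+N_n\), is the same polynomial. Equality therefore holds identically in \(\lambda\). The only remaining care is the bookkeeping at the root level, where \(m_{n-1}=k\) rather than \(k-1\).
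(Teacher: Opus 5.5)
Your proposal is correct and follows essentially the same route as the paper: level-by-level block elimination (Schur complements) from the pendant level up to the root, using \(J_{m_s}^TJ_{m_s}=m_sI\). It produces the pivots \(P_{fg,s}/P_{fg,s-1}\) and the modified root pivot \(\bigl(P_{fg,n}-f^2(k,k)P_{fg,n-2}\bigr)/P_{fg,n-1}\) from \(m_{n-1}=k\), and then telescopes the exponents. Your explicit induction and your argument over \(\mathbb{R}(\lambda)\) for dividing by the \(P_{fg,s}\) fill in details that the paper's sketch (``repeating this elimination gives the result'') leaves implicit.
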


\begin{pf}
The weighted adjacency-degree matrix of \(D_{n,k}\) is
\[
A_{fg}(D_{n,k})=
\begin{pmatrix}
g(1)I_{N_{0}} & f(1,k)J_{m_{0}} &  &  \\
f(1,k)J_{m_{0}}^T & g(k)I_{N_{1}} & f(k,k)J_{m_{1}} & \\
& \ddots & \ddots & \\
& & f(k,k)J_{m_{n-1}}^T & g(k)I_{N_{n}}
\end{pmatrix},
\]
where \(J_{m_s}\) is the \(N_s\times N_{s+1}\) 0-1 matrix
\(
J_{m_s}=I_{N_{s+1}}\otimes \mathbf{1}_{m_s},
\)
with \(\mathbf{1}_{m_s}\) the all-ones column vector of length \(m_s\). Thus each column of \(J_{m_s}\) contains exactly \(m_s\) ones, and
\(
J_{m_s}^T J_{m_s}=m_s I_{N_{s+1}}.
\)
By block elimination, adding \(\frac{f(1,k)}{\lambda-g(1)}J_{m_0}^T\) times the first row to the second row yields
\[
|\lambda I-A_{fg}(D_{n,k})|
=
\begin{vmatrix}
P_{fg,0}I_{N_0} & -f(1,k)J_{m_0} & & \\
& \frac{P_{fg,1}}{P_{fg,0}}I_{N_1} & -f(k,k)J_{m_1} & \\
& & \ddots & \\
& & & \frac{P_{fg,n}-f^2(k,k)P_{fg,n-2}}{P_{fg,n-1}}I_{N_n}
\end{vmatrix}.
\]
Repeating this elimination gives the result.
\qed
\end{pf}

The Bethe tree \(B_{n,k}\) has a similar layered structure, except that the root also has \(k-1\) branches. This leads to a slightly different end factor and exponents.

\begin{theorem}\label{thm:B}
For \(n\ge2\), the characteristic polynomial of the weighted adjacency-degree matrix of \(B_{n,k}\) is
\[
\phi(A_{fg}(B_{n,k}),\lambda)
= \biggl( \bigl( \lambda - g(k-1) \bigr) P_{fg,n-1}
- f^2(k-1,k)(k-1) P_{fg,n-2} \biggr)
\prod_{j=0}^{n-1} P_{fg,j}^{(k-1)^{n-j-1}(k-2)}.
\]
\end{theorem}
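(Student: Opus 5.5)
The plan mirrors the proof of Theorem~\ref{thm:D}: write \(\lambda I-A_{fg}(B_{n,k})\) in block tridiagonal form ordered by levels, eliminate level by level starting from the pendant vertices, and telescope the resulting diagonal factors. Index the levels by \(s=0,1,\dots,n\), where \(s=0\) is the pendant level and \(s=n\) is the root, so that level \(s\) has \(N'_s=(k-1)^{n-s}\) vertices and \(m'_s=k-1\).

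First, record the degrees. Pendant vertices have degree \(1\). Every vertex at levels \(1\le s\le n-1\) has degree \(k\), namely \(k-1\) children plus one parent; this uses \(n\ge2\). The root has degree \(k-1\). Hence:
\begin{itemize}
\item the diagonal blocks are \(g(1)I_{N'_0}\), then \(g(k)I_{N'_s}\) for \(1\le s\le n-1\), and the scalar \(g(k-1)\) at the root;
\item the off-diagonal blocks are \(f(1,k)J'_{0}\), then \(f(k,k)J'_{s}\) for \(1\le s\le n-2\), and \(f(k,k-1)J'_{n-1}\).
\end{itemize}
Here \(J'_s=I_{N'_{s+1}}\otimes\mathbf 1_{k-1}\), so that \(J_s'^{T}J'_s=(k-1)I_{N'_{s+1}}\).

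Next, work over the field of rational functions in \(\lambda\), or equivalently for \(\lambda\) avoiding the finitely many roots of the \(P_{fg,j}\). The final identity is polynomial, so it then extends by continuity. Induct on \(s\): after eliminating levels \(0,\dots,s-1\), the diagonal block at level \(s\) becomes \(\frac{P_{fg,s}}{P_{fg,s-1}}I_{N'_s}\), with the convention \(P_{fg,-1}=1\). The base case \(s=0\) is \(P_{fg,0}=\lambda-g(1)\). The step \(s=1\) gives \(\lambda-g(k)-f^2(1,k)(k-1)/P_{fg,0}=P_{fg,1}/P_{fg,0}\). For \(2\le s\le n-1\), the Schur complement gives
\[
\lambda-g(k)-f^2(k,k)(k-1)\frac{P_{fg,s-2}}{P_{fg,s-1}}=\frac{P_{fg,s}}{P_{fg,s-1}}
\]
by the recurrence in Definition~\ref{def:Pfg}. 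The identity \(J_s'^{T}J'_s=(k-1)I\) is exactly what makes each Schur complement a scalar multiple of the identity. At the root, the same computation yields the scalar
\[
\lambda-g(k-1)-f^2(k-1,k)(k-1)\frac{P_{fg,n-2}}{P_{fg,n-1}}.
\]

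Finally, the determinant is the product of the determinants of these diagonal blocks:
\[
\Bigl(\lambda-g(k-1)-f^2(k-1,k)(k-1)\tfrac{P_{fg,n-2}}{P_{fg,n-1}}\Bigr)\prod_{s=0}^{n-1}\Bigl(\tfrac{P_{fg,s}}{P_{fg,s-1}}\Bigr)^{(k-1)^{n-s}}.
\]
Now collect exponents.
\begin{itemize}
\item For \(j\le n-2\), \(P_{fg,j}\) appears with exponent \((k-1)^{n-j}-(k-1)^{n-j-1}=(k-1)^{n-j-1}(k-2)\).
\item \(P_{fg,n-1}\) appears with exponent \(k-1\) from the product. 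One factor of \(P_{fg,n-1}\) is absorbed when the root factor is multiplied through to become \((\lambda-g(k-1))P_{fg,n-1}-f^2(k-1,k)(k-1)P_{fg,n-2}\). This leaves exponent \(k-2=(k-1)^0(k-2)\), matching the \(j=n-1\) term of the stated product.
\end{itemize}

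The only delicate points are bookkeeping rather than conceptual. One must check that the root's degree \(k-1\) enters only the last diagonal entry and the last edge weight. One must also verify that the exponent telescoping is correct at the top level, including the absorption of one factor of \(P_{fg,n-1}\). Finally, the division by the \(P_{fg,j}\) must be justified by the rational-function (or density) argument.
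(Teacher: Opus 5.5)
Your proposal is correct and follows essentially the same route as the paper: Schur-complement block elimination level by level from the pendant layer up to the root, with \(J_s'^{T}J_s'=(k-1)I\) keeping each complement a scalar multiple of the identity, followed by telescoping of the exponents. The paper only says the proof is analogous to Theorem~\ref{thm:D}. Your write-up supplies the details it leaves implicit: the root degree \(k-1\) enters only the last diagonal entry and the last edge weight, and one factor of \(P_{fg,n-1}\) is absorbed into the end factor.
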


\begin{pf}
The proof is analogous to that of Theorem~\ref{thm:D}. The block elimination yields the end factor
\[
(\lambda-g(k-1))P_{fg,n-1}-f^2(k-1,k)(k-1)P_{fg,n-2}
\]
and the product over \(j=0,\dots,n-1\) with exponents \((k-1)^{n-j-1}(k-2)\).
\qed
\end{pf}

\subsection{Characteristic polynomials of special cases}

Using the unified matrix \(A_{fg}\), the characteristic polynomials of many classical matrices associated with \(D_{n,k}\) and \(B_{n,k}\) can be obtained by choosing the corresponding parameters \(f(1,k)\), \(f(k,k)\), \(g(1)\), and \(g(k)\). The resulting factorizations are collected in Tables~\ref{tab:Dnk} and~\ref{tab:Bnk}. 

\begin{corollary}\label{cor:special-cases}
The characteristic polynomials of the matrices associated with \(D_{n,k}\) and \(B_{n,k}\) are listed in Tables~\ref{tab:Dnk} and~\ref{tab:Bnk}. In these tables, \(\beta=1-\alpha\) for the \(A_\alpha\) matrix.

\begin{table}
\centering
\caption{The characteristic polynomial of \(D_{n,k}\)}
\label{tab:Dnk}
\setlength{\tabcolsep}{2pt}
\renewcommand{\arraystretch}{1.15}
\footnotesize
\begin{tabular}{cccccc}
\toprule
\(D_{n,k}\) & \(f^{2}(1,k)\) & \(f^{2}(k,k)\) & \(g(1)\) & \(g(k)\) & \(\phi(\lambda)\) \\
\midrule
\(L/L^{+}\) & \(1\) & \(1\) & \(1\) & \(k\) & \((P_{L,n}-P_{L,n-2})P_{L,n-1}^{k-1}\prod_{j=0}^{n-2}P_{L,j}^{k(k-1)^{n-j-2}(k-2)}\) \\
\(ABC\) matrix & \(\frac{k-1}{k}\) & \(\frac{2(k-1)}{k^{2}}\) & \(0\) & \(0\) & \([P_{ABC,n}-\frac{2(k-1)}{k^{2}}P_{ABC,n-2}]P_{ABC,n-1}^{k-1}\prod_{j=0}^{n-2}P_{ABC,j}^{k(k-1)^{n-j-2}(k-2)}\) \\
Randić matrix & \(\frac{1}{k}\) & \(\frac{1}{k^{2}}\) & \(0\) & \(0\) & \([P_{R,n}-\frac{1}{k^{2}}P_{R,n-2}]P_{R,n-1}^{k-1}\prod_{j=0}^{n-2}P_{R,j}^{k(k-1)^{n-j-2}(k-2)}\) \\
adjacency matrix & \(1\) & \(1\) & \(0\) & \(0\) & \([P_{A,n}-P_{A,n-2}]P_{A,n-1}^{k-1}\prod_{j=0}^{n-2}P_{A,j}^{k(k-1)^{n-j-2}(k-2)}\) \\
\(A_{\alpha}\) matrix & \(\beta^{2}\) & \(\beta^{2}\) & \(\alpha\) & \(\alpha k\) & \([P_{\alpha,n}-\beta^{2}P_{\alpha,n-2}]P_{\alpha,n-1}^{k-1}\prod_{j=0}^{n-2}P_{\alpha,j}^{k(k-1)^{n-j-2}(k-2)}\) \\
\bottomrule
\end{tabular}
\end{table}
\begin{table}
\centering
\caption{The characteristic polynomial of \(B_{n,k}\)}
\label{tab:Bnk}
\setlength{\tabcolsep}{1pt}
\renewcommand{\arraystretch}{1.15}
\scriptsize
\begin{tabular}{cccccccc}
\toprule
\(B_{n,k}\) & \(f^{2}(1,k)\) & \(f^{2}(k,k)\) & \(f^{2}(k-1,k)\) & \(g(1)\) & \(g(k)\) & \(g(k-1)\) & \(\phi(\lambda)\) \\
\midrule
\(L/L^{+}\) & \(1\) & \(1\) & \(1\) & \(1\) & \(k\) & \(k-1\) & \(\bigl[(\lambda-(k-1))P_{L,n-1}-(k-1)P_{L,n-2}\bigr]\prod_{j=0}^{n-1}P_{L,j}^{(k-1)^{n-j-1}(k-2)}\) \\
\(ABC\) matrix & \(\frac{k-1}{k}\) & \(\frac{2(k-1)}{k^{2}}\) & \(\frac{2k-3}{k(k-1)}\) & \(0\) & \(0\) & \(0\) & \(\bigl[\lambda P_{ABC,n-1}-\frac{2k-3}{k}P_{ABC,n-2}\bigr]\prod_{j=0}^{n-1}P_{ABC,j}^{(k-1)^{n-j-1}(k-2)}\) \\
Randić matrix & \(\frac{1}{k}\) & \(\frac{1}{k^{2}}\) & \(\frac{1}{k(k-1)}\) & \(0\) & \(0\) & \(0\) & \(\bigl[\lambda P_{R,n-1}-\frac{1}{k}P_{R,n-2}\bigr]\prod_{j=0}^{n-1}P_{R,j}^{(k-1)^{n-j-1}(k-2)}\) \\
adjacency matrix & \(1\) & \(1\) & \(1\) & \(0\) & \(0\) & \(0\) & \(\bigl[\lambda P_{A,n-1}-(k-1)P_{A,n-2}\bigr]\prod_{j=0}^{n-1}P_{A,j}^{(k-1)^{n-j-1}(k-2)}\) \\
\(A_{\alpha}\) matrix & \(\beta^{2}\) & \(\beta^{2}\) & \(\beta^{2}\) & \(\alpha\) & \(\alpha k\) & \(\alpha(k-1)\) & \(\bigl[(\lambda-\alpha(k-1))P_{\alpha,n-1}-\beta^{2}(k-1)P_{\alpha,n-2}\bigr]\prod_{j=0}^{n-1}P_{\alpha,j}^{(k-1)^{n-j-1}(k-2)}\) \\
\bottomrule
\end{tabular}
\end{table}
\end{corollary}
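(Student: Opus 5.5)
The corollary is a direct specialization of Theorem~\ref{thm:D} and Theorem~\ref{thm:B}. For each row of Tables~\ref{tab:Dnk} and~\ref{tab:Bnk}, we read off \(f\) and \(g\) from Table~\ref{tab:special-cases}, evaluate them at the degrees occurring in the trees, and substitute these values into Definition~\ref{def:Pfg} and the two factorization formulas. The only structural input is the degree profile. In \(D_{n,k}\) the root and all internal vertices have degree \(k\) and the pendant vertices have degree \(1\), so only \(f(1,k)\), \(f(k,k)\), \(g(1)\), \(g(k)\) occur. In \(B_{n,k}\) the root has degree \(k-1\), so \(f(k-1,k)\) and \(g(k-1)\) also appear, and these enter only the end factor. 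I will note explicitly that the root of \(B_{n,k}\) is adjacent only to degree-\(k\) vertices when \(n\ge2\), so the pairing \(f(k-1,k)\) is the correct one.

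The steps are as follows.

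\textbf{Step 1: compute the squared weights and diagonal values.}
\begin{itemize}
\item \(L\) and \(L^{+}\): the entries are \(\pm1\), so all squares equal \(1\), and \(g(d)=d\). The sign of \(f\) does not matter, because only \(f^2\) enters \(P_{fg,n}\) and the end factors. This is why \(L\) and \(L^{+}\) share a single row.
\item ABC: \(f^2(1,k)=\tfrac{k-1}{k}\), \(f^2(k,k)=\tfrac{2k-2}{k^2}\), \(f^2(k-1,k)=\tfrac{2k-3}{k(k-1)}\), with \(g\equiv0\).
\item Randi\'c: \(f^2(1,k)=1/k\), \(f^2(k,k)=1/k^2\), \(f^2(k-1,k)=1/(k(k-1))\).
\item Adjacency: all squared weights equal \(1\) and \(g\equiv0\).
\item \(A_\alpha\): every \(f^2\) equals \(\beta^2\) and \(g(d)=\alpha d\).
\end{itemize}

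\textbf{Step 2: substitute into Theorem~\ref{thm:D}.} This yields the \(D_{n,k}\) column, with the end factor \(P_{n}-f^2(k,k)P_{n-2}\) and the unchanged exponents.

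\textbf{Step 3: substitute into Theorem~\ref{thm:B}.} In the end factor, simplify \(f^2(k-1,k)(k-1)\). For ABC this gives \(\tfrac{2k-3}{k}\), for Randi\'c \(\tfrac1k\), for adjacency \(k-1\), and for \(A_\alpha\) it gives \(\beta^2(k-1)\). Also \(\lambda-g(k-1)\) becomes \(\lambda\) when \(g\equiv0\), \(\lambda-(k-1)\) for \(L/L^{+}\), and \(\lambda-\alpha(k-1)\) for \(A_\alpha\).

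\textbf{Main obstacle.} There is no real difficulty, only bookkeeping. The subtle point is the \(L\) versus \(L^{+}\) identification. I would justify it by observing that the eliminations in Theorems~\ref{thm:D} and~\ref{thm:B} only ever produce \(f^2\) terms. Equivalently, since a tree is bipartite, conjugating by the diagonal \(\pm1\) signature of the bipartition maps \(L\) to \(L^{+}\).

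I would also check the ABC entry \(f^2(k,k)=2(k-1)/k^2\) directly from the formula \(\tfrac{d_i+d_j-2}{d_id_j}\).

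Finally, I would remark that the \(L/L^{+}\) row for \(D_{n,k}\) uses \(f^2(k,k)=1\) in the end factor, which gives \(P_{L,n}-P_{L,n-2}\) as displayed.
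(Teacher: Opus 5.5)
Your proposal is correct and follows the paper's own route. The paper gives no separate argument beyond reading \(f\) and \(g\) off Table~\ref{tab:special-cases}, evaluating them at the degrees \(1\) and \(k\) (and \(k-1\) for the root of \(B_{n,k}\)), and substituting into Theorems~\ref{thm:D} and~\ref{thm:B}. Your constants \(\tfrac{2k-3}{k}\) and \(\tfrac1k\), and your observation that only \(f^2\) enters (which justifies the shared \(L/L^{+}\) row), are all accurate; the bipartite-signature conjugation is a harmless extra confirmation.
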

\noindent\textbf{Note.} In the tables, the first subscript of \(P\) indicates the matrix type, and the second subscript indicates the index. The upper indices denote powers. The functions \(f\) and \(g\) are defined in Table~\ref{tab:special-cases}; the parameters appearing in Tables~\ref{tab:Dnk} and~\ref{tab:Bnk} are obtained by evaluating these functions at the indicated degrees. In particular, for \(B_{n,k}\) the root has degree \(k-1\), giving \(f(k-1,k)\) and \(g(k-1)\).

\section{Chebyshev Reduction and Root Equations}\label{sec:cheb-roots}
The factorizations in Theorems~\ref{thm:D} and~\ref{thm:B} reduce the spectral problem to the roots of \(P_{fg,j}\). For the \(L/L^{+}\) case, where \(f^2(1,k)=f^2(k,k)\), Lemma~\ref{lem:Pfg-explicit} gives a Chebyshev reduction to the boundary equations \(U_{j+1}(x)+sU_j(x)=0\). For general \(f\) and \(g\) with \(f^2(1,k)\ne f^2(k,k)\), such a reduction is not available, and one must use the general explicit formula in Lemma~\ref{lem:Pfg-explicit}. In this section we make the reduction explicit for the cases where it applies and record the resulting equations. We assume \(k>1\) and that the relevant parameters make the reduction nondegenerate; degenerate cases are treated separately.
\begin{proposition}\label{prop:cheb-roots}
Let \(r=k-1\), \(s=\sqrt r\). For \(L/L^+\), put \(x_L=(\lambda-k)/(2s)\). Then
\(P_{L,j}=0\) iff \(U_{j+1}(x_L)+sU_j(x_L)=0\). This equation is equivalent to
\[
\sin((j+2)\theta)+s\sin((j+1)\theta)=0
\quad (x_L=\cos\theta,\ 0<\theta<\pi),
\]
\[
s\sinh((j+1)t)=\sinh((j+2)t)
\quad (x_L=-\cosh t,\ t>0),
\]
and has no solution for \(x_L>1\); the cases \(x_L=\pm1\) are checked directly. Moreover,
\[
P_{L,n}-P_{L,n-2}
=r^{(n-1)/2}\lambda\bigl(sU_n(x_L)-U_{n-1}(x_L)\bigr),
\]
so its roots are \(\lambda=0\) and those of \(sU_n(x_L)=U_{n-1}(x_L)\).

For \(A_\alpha\) with \(\alpha\ne1\), put
\(
\beta=1-\alpha,\quad
s_\alpha=\beta\sqrt r,\quad
x_\alpha=\frac{\lambda-\alpha k}{2s_\alpha},\quad
\delta_\alpha=\frac{\alpha\sqrt r}{\beta}.
\)
Then
\(
P_{\alpha,j}
=s_\alpha^{j+1}
\bigl(U_{j+1}(x_\alpha)+\delta_\alpha U_j(x_\alpha)\bigr),
\)
and \(P_{\alpha,j}=0\) iff
\(U_{j+1}(x_\alpha)+\delta_\alpha U_j(x_\alpha)=0\).
The case \(\alpha=1\) gives \(A_\alpha=D\).

In general the equations are not equidistant cosine equations; explicit cosine formulas exist only for the factors \(P_{A,j}\) of the adjacency matrix and for the \(L/L^+\) end factor of \(B_{n,k}\).
\end{proposition}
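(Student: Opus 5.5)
The plan is to prove the Chebyshev representation directly from the recurrence in Definition~\ref{def:Pfg}, rather than through the closed form of Lemma~\ref{lem:Pfg-explicit}. The formula for $P_{L,n}$ stated in that lemma could simply be quoted, but the recurrence argument handles $L/L^+$ and $A_\alpha$ uniformly.

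\textbf{Step 1: Chebyshev representation.} Put $\sigma=f(k,k)\sqrt{k-1}$, so $\sigma=s$ for $L/L^+$ and $\sigma=s_\alpha$ for $A_\alpha$. Put $x=(\lambda-g(k))/(2\sigma)$ and $Q_j=P_{fg,j}/\sigma^{j+1}$. Dividing the three-term recurrence by $\sigma^{j+1}$ gives
\[
Q_j=2xQ_{j-1}-Q_{j-2}, \qquad j\ge2,
\]
which is exactly the Chebyshev recurrence. Hence it suffices to match the initial terms $Q_0$ and $Q_1$ with $U_1+\delta U_0$ and $U_2+\delta U_1$; the identity then propagates by linearity.

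For $L/L^+$ we have $\lambda-1=2sx+s^2$. This gives $P_{L,0}=s(2x+s)$ and
\[
P_{L,1}=(2sx)\,s(2x+s)-s^2=s^2\bigl(4x^2-1+2sx\bigr)=s^2\bigl(U_2+sU_1\bigr).
\]
For $A_\alpha$ we have $\lambda-\alpha=2s_\alpha x+\alpha r$ and $\alpha r/s_\alpha=\delta_\alpha$. This gives $P_{\alpha,0}=s_\alpha(U_1+\delta_\alpha U_0)$, and $P_{\alpha,1}$ is checked in the same way; this uses $f^2(1,k)=f^2(k,k)$, so that $c=\sigma^2$. Consequently $P_j=0$ if and only if $U_{j+1}+\delta U_j=0$, since $\sigma\neq0$ when $k>1$ and $\alpha\ne1$.

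\textbf{Step 2: trigonometric and hyperbolic forms.} Substitute $U_m(\cos\theta)=\sin((m+1)\theta)/\sin\theta$ and multiply by $\sin\theta\neq0$; this gives the sine equation. For $x_L=-\cosh t$, use $U_m(-\cosh t)=(-1)^m\sinh((m+1)t)/\sinh t$. The equation becomes
\[
(-1)^{j+1}\bigl[\sinh((j+2)t)-s\sinh((j+1)t)\bigr]=0,
\]
which is the stated hyperbolic equation. For $x_L=\cosh t>1$, both $U_{j+1}$ and $U_j$ are positive, so with $s>0$ there is no root. The boundary values $x_L=\pm1$ are handled with $U_m(\pm1)=(\pm1)^m(m+1)$: for $x_L=1$ there is no root, while $x_L=-1$ is a root only if $j+2=s(j+1)$.

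\textbf{Step 3: the end factor of $D_{n,k}$.} Write
\[
P_{L,n}-P_{L,n-2}=s^{n-1}\Bigl[s^2\bigl(U_{n+1}+sU_n\bigr)-U_{n-1}-sU_{n-2}\Bigr].
\]
Eliminate $U_{n+1}=2xU_n-U_{n-1}$ and $U_{n-2}=2xU_{n-1}-U_n$. The bracket then becomes
\[
U_n\bigl(2xs^2+s^3+s\bigr)-U_{n-1}\bigl(s^2+1+2xs\bigr).
\]
Since $\lambda=2sx+s^2+1$, this equals $\lambda\bigl(sU_n-U_{n-1}\bigr)$, as claimed.

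\textbf{Step 4: the closing remark.} For the adjacency matrix, $g\equiv0$ gives $\delta=0$. The factors therefore reduce to $U_{j+1}(x)=0$, with roots $x=\cos\bigl(\ell\pi/(j+2)\bigr)$. For the $L/L^+$ end factor of $B_{n,k}$, I would apply the same elimination as in Step 3 to $(\lambda-r)P_{L,n-1}-rP_{L,n-2}$, expecting a product of $\lambda$ and a pure $U$-difference with cosine roots.

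I expect the main obstacle to be this last remark: both verifying the $B_{n,k}$ end-factor simplification and justifying the word ``only'', which is best read as a non-genericity remark rather than a proved classification. Careful bookkeeping at the degenerate points $x=\pm1$ is the other delicate part.
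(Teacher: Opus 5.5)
Your proposal is correct, and it proves considerably more than the paper's proof does. The paper's proof only quotes the Chebyshev form of $P_{L,j}$ from Lemma~\ref{lem:Pfg-explicit} and substitutes $U_j(\cos\theta)=\sin((j+1)\theta)/\sin\theta$. The hyperbolic form, the exclusion of $x_L>1$, the cases $x_L=\pm1$, the end-factor identity and the $A_\alpha$ representation are not verified there.

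The core step also differs. The paper reaches the Chebyshev form through the closed form in the characteristic roots $a,b$, i.e.\ via $(a^{m+1}-b^{m+1})/(a-b)=s^mU_m(x)$. That route is only available for $a\ne b$, that is $x\ne\pm1$. You instead normalize the recurrence by $\sigma^{j+1}$ and match $Q_0,Q_1$. This gives a polynomial identity valid for every $\lambda$, with no case split. It also covers every instance of $f^2(1,k)=f^2(k,k)$ at once, so the $A_\alpha$ claim comes for free. Your Steps 2 and 3 check out as written; in particular the bracket in Step 3 reduces to $\lambda(sU_n-U_{n-1})$ via $\lambda=2sx+s^2+1$.

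Three small remarks.

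\emph{The sign of $\sigma$.} For $L$, your $\sigma=f(k,k)\sqrt{k-1}$ equals $-s$, not $s$. Since $P_{fg,j}$ depends only on $f^2$, just use $f=1$ (or $\sigma=\sqrt d$) there.

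\emph{The $B_{n,k}$ end factor.} You left this as an expectation. It closes by the same elimination, but to a single $U_n$ rather than a difference. With $x=x_L$, use $\lambda-r=2sx+1$ and $2xU_{n-1}-U_{n-2}=U_n$:
\begin{align*}
(\lambda-r)P_{L,n-1}-rP_{L,n-2}
&=s^n\bigl[(2sx+1)(U_n+sU_{n-1})-s(U_{n-1}+sU_{n-2})\bigr]\\
&=s^n(2sx+1+s^2)\,U_n=r^{n/2}\lambda\,U_n(x_L).
\end{align*}
So its roots are $\lambda=0$ and $x_L=\cos\frac{\ell\pi}{n+1}$, $\ell=1,\dots,n$.

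\emph{The word ``only''.} You are right to read it as a heuristic; taken literally it is false. For $k=2$ one has $s=1$ and $\sin((j+2)\theta)+\sin((j+1)\theta)=2\sin\frac{(2j+3)\theta}{2}\cos\frac{\theta}{2}$. Hence every $P_{L,j}$ has explicit cosine roots. The same holds for the ABC matrix at $k=2$, where $\delta=0$. The paper's proof does not address this sentence either.
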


\begin{pf}
By Lemma~\ref{lem:Pfg-explicit},
\(
P_{L,j}=r^{(j+1)/2}\bigl(U_{j+1}(x_L)+sU_j(x_L)\bigr),
\)
so \(P_{L,j}=0\) iff \(U_{j+1}(x_L)+sU_j(x_L)=0\).
The trigonometric form is obtained from \(U_j(\cos\theta)=\sin((j+1)\theta)/\sin\theta\).
\qed
\end{pf}

\begin{corollary}\label{cor:D-L+}
The eigenvalues of the characteristic polynomial of \(L^{+}(D_{n,k})\) (and of \(L(D_{n,k})\)) are:
\begin{enumerate}
\item the roots of \(sU_n(x_L)=U_{n-1}(x_L)\), together with \(\lambda=0\);
\item for \(j=0,\ldots,n-2\), the roots of \(P_{L,j}\), each of multiplicity \(k(k-1)^{n-j-2}(k-2)\);
\item the roots of \(P_{L,n-1}\), each of multiplicity \(k-1\).
\end{enumerate}
If a root from different factors coincides, the total multiplicity is the sum of the corresponding multiplicities.
\end{corollary}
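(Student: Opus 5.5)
This corollary should follow by specializing Theorem~\ref{thm:D} to the parameters of the \(L/L^{+}\) row of Table~\ref{tab:Dnk} and then reading off the roots of each factor with Proposition~\ref{prop:cheb-roots}. With \(f^{2}(1,k)=f^{2}(k,k)=1\), \(g(1)=1\), \(g(k)=k\), the sequences \(P_{fg,j}\) for \(L\) and for \(L^{+}\) coincide, because only \(f^{2}\) enters Definition~\ref{def:Pfg}. Hence both matrices have the same factorization
\[
\phi(\lambda)=\bigl(P_{L,n}-P_{L,n-2}\bigr)P_{L,n-1}^{\,k-1}\prod_{j=0}^{n-2}P_{L,j}^{\,k(k-1)^{n-j-2}(k-2)}.
\]
This also reflects the fact that \(D_{n,k}\) is bipartite, so \(L\) and \(L^{+}\) are similar via a diagonal \(\pm1\) signature matrix.

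I will then treat the three kinds of factors in turn.

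\textbf{End factor.} The identity in Proposition~\ref{prop:cheb-roots},
\[
P_{L,n}-P_{L,n-2}=r^{(n-1)/2}\lambda\bigl(sU_n(x_L)-U_{n-1}(x_L)\bigr),
\]
shows that its roots are exactly \(\lambda=0\) together with the roots of \(sU_n(x_L)=U_{n-1}(x_L)\). This is item~1. If I have to justify that identity rather than just cite it, I would substitute \(P_{L,j}=r^{(j+1)/2}(U_{j+1}+sU_j)\) and reduce with the Chebyshev recurrence \(U_{n+1}=2x_LU_n-U_{n-1}\) and \(2sx_L=\lambda-k\). After this reduction the factor \(\lambda\) should emerge from \(\lambda-k+(k-1)+1\).

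\textbf{Middle factors.} The factors \(P_{L,j}\) for \(j\le n-2\) carry exponent \(k(k-1)^{n-j-2}(k-2)\). The factor \(P_{L,n-1}\) carries exponent \(k-1\). These give items~2 and~3.

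To confirm that the exponents are consistent, I will check the degree count. The end factor has degree \(n+1\), and \(P_{L,j}\) has degree \(j+1\). The total should therefore be
\[
(n+1)+n(k-1)+\sum_{j=0}^{n-2}(j+1)k(k-1)^{n-j-2}(k-2),
\]
and this must equal \(|V(D_{n,k})|=1+k\sum_{i=0}^{n-1}(k-1)^{i}\). I expect this check to telescope using the differences \(N_{n-j}-N_{n-j+1}\) listed in the level parameters.

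The final remark, that multiplicities add when roots of different factors coincide, is immediate: the multiplicity of a root of a product is the sum of its multiplicities in the factors. Since \(A_{fg}\) is real symmetric, algebraic multiplicity equals eigenvalue multiplicity.

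The only real obstacle is the end-factor identity, if it is not taken as already established. In particular the degenerate point \(a=b\), i.e. \(x_L=\pm1\), needs care. I would handle it by continuity, since both sides are polynomials in \(\lambda\) and agree on a dense set.
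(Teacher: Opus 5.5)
Your proposal is correct and follows the paper's own argument. You specialize Theorem~\ref{thm:D} to the \(L/L^{+}\) parameters, cite the end-factor identity \(P_{L,n}-P_{L,n-2}=r^{(n-1)/2}\lambda\bigl(sU_n(x_L)-U_{n-1}(x_L)\bigr)\) from Proposition~\ref{prop:cheb-roots}, and read the multiplicities off the exponents. Your sketched derivation of that identity (write \(\lambda=2sx_L+r+1\) and reduce with the Chebyshev recurrence) checks out and supplies a step the paper states without proof; the degree count is a valid but unnecessary consistency check.
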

\begin{pf}
The factorization follows from Theorem~\ref{thm:D} with the \(L/L^{+}\) parameters. The Chebyshev form of \(P_{L,n}-P_{L,n-2}\) is given in Proposition~\ref{prop:cheb-roots}. The multiplicities are read directly from the exponents in the factorization.
\qed
\end{pf}

For the Bethe tree, the end factor simplifies and yields an explicit cosine formula for part of the spectrum.
\begin{corollary}\label{cor:B-L+}
The eigenvalues of the characteristic polynomial of \(L^{+}(B_{n,k})\) are as follows:
\begin{enumerate}
\item the roots of \(H_{B,L}=(\lambda-(k-1))P_{L,n-1}-(k-1)P_{L,n-2}
=r^{n/2}\lambda U_n(x_L)\) are \(\lambda=0\) and
\(
\lambda=k+2s\cos\frac{\ell\pi}{n+1}, (\ell=1,\ldots,n),
\)
each being a simple root of \(H_{B,L}\). 
\item the roots of \(P_{L,j}\) are \(\lambda=k+2s x\), where \(x\) is a real root of
\(
U_{j+1}(x)+sU_j(x)=0
\) for \(j=0,\ldots,n-1\).
Each such root contributes multiplicity \((k-1)^{n-j-1}(k-2)\) to the full characteristic polynomial.
\end{enumerate} 
\end{corollary}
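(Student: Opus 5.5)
Apply Theorem~\ref{thm:B} with the $L/L^{+}$ parameters $f^2(1,k)=f^2(k,k)=f^2(k-1,k)=1$, $g(1)=1$, $g(k)=k$, $g(k-1)=k-1$. This yields
\[
\phi(L^{+}(B_{n,k}),\lambda)=H_{B,L}\prod_{j=0}^{n-1}P_{L,j}^{(k-1)^{n-j-1}(k-2)},\qquad H_{B,L}=(\lambda-(k-1))P_{L,n-1}-(k-1)P_{L,n-2}.
\]
The multiplicity statement in part~2 is read off from the exponents. The root description in part~2 follows from Proposition~\ref{prop:cheb-roots}: $P_{L,j}=0$ iff $U_{j+1}(x_L)+sU_j(x_L)=0$ with $x_L=(\lambda-k)/(2s)$, i.e.\ $\lambda=k+2sx$. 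The real part of the work is therefore the identity $H_{B,L}=r^{n/2}\lambda U_n(x_L)$ and the enumeration of its roots.

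For the identity, substitute the Chebyshev form from Lemma~\ref{lem:Pfg-explicit}, $P_{L,j}=r^{(j+1)/2}(U_{j+1}+sU_j)$, all evaluated at $x=x_L$. Write $\lambda-(k-1)=2sx+1$, since $\lambda-k=2sx$ and $k-(k-1)=1$. Then
\[
H_{B,L}=r^{n/2}\bigl[(2sx+1)(U_n+sU_{n-1})-s(U_{n-1}+sU_{n-2})\bigr],
\]
where the factor $r\cdot r^{(n-1)/2}$ of the second term was rewritten as $s\cdot r^{n/2}$. Expanding, and using the three-term recurrence in the form $2xU_{n-1}-U_{n-2}=U_n$, the bracket becomes
\[
2sxU_n+U_n+s^2U_n=(2sx+1+r)U_n=(2sx+k)U_n=\lambda U_n .
\]
The cancellation of the $sU_{n-1}$ terms is what makes the identity work. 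I expect this bookkeeping of the powers of $s$ to be the only delicate step. The case $n=1$ (where $P_{L,-1}$ would appear) is excluded by the standing hypothesis $n\ge2$.

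For the roots, $U_n(\cos\theta)=\sin((n+1)\theta)/\sin\theta$ vanishes exactly at $\theta=\ell\pi/(n+1)$, $\ell=1,\dots,n$. These are $n$ distinct values of $x_L$, which is all the roots of the degree-$n$ polynomial $U_n$. Hence $\lambda=k+2s\cos(\ell\pi/(n+1))$, and together with $\lambda=0$ these are the $n+1$ roots of the degree-$(n+1)$ polynomial $H_{B,L}$.

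Simplicity follows once we check that $0$ is not among the cosine roots. We need $k+2s\cos(\ell\pi/(n+1))>k-2\sqrt{k-1}=(\sqrt{k-1}-1)^2\ge0$, and the first inequality is strict because $\cos(\ell\pi/(n+1))>-1$. For $k=1$, $s=0$ and all roots equal $1\neq0$; these trivial degenerate cases are excluded by the section's assumption $k>1$. Thus each root of $H_{B,L}$ is simple. The same argument applies to $L(B_{n,k})$, because trees are bipartite and so $L$ and $L^{+}$ are similar.
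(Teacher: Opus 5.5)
Your proposal is correct and follows the paper's own route. You specialize Theorem~\ref{thm:B} to the \(L/L^{+}\) parameters, substitute the Chebyshev form of \(P_{L,j}\) from Lemma~\ref{lem:Pfg-explicit} to get \(H_{B,L}=r^{n/2}\lambda U_n(x_L)\), and then read off roots and exponents. Your explicit algebra and the simplicity check \(k+2s\cos\frac{\ell\pi}{n+1}>(s-1)^2\ge 0\) fill in what the paper's one-line proof leaves implicit.

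One tacit point, shared with the paper: reading the multiplicities in part~2 off the exponents assumes each \(P_{L,j}\) has simple real roots. This holds because \(P_{L,j}\) is the characteristic polynomial of an irreducible symmetric tridiagonal matrix with diagonal \((1,k,\dots,k)\) and off-diagonal entries \(s\).
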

\begin{pf}
The end factor identity follows from the Chebyshev form of \(P_{L,n}\). Its roots are \(0\) and \(k+2s\cos(\ell\pi/(n+1))\), \(\ell=1,\ldots,n\), each simple as a root of \(H_{B,L}\). The remaining factors are treated as in Corollary~\ref{cor:D-L+}; coincidences between roots of different factors are counted by adding multiplicities. 
\qed
\end{pf}

\subsection{Positive semidefiniteness and energy equalities}

If \(A_{fg}\) is positive semidefinite, its energy coincides with its trace. We first recall the Gershgorin disc theorem, which yields a simple sufficient condition for positive semidefiniteness.

\begin{theorem}[{\cite[Chapter 6]{HornJohnson2013}; \cite{Varga2004}}]\label{thm:gershgorin}
Let \(A=(a_{ij})\) be an \(N\times N\) complex matrix. For each \(i=1,\dots,N\), define the Gershgorin disc
\(
D_i:=\left\{z\in\mathbb{C}:\ |z-a_{ii}|\le R_i\right\},
R_i:=\sum_{j\ne i}|a_{ij}|.
\)
Then every eigenvalue of \(A\) lies in the union \(\bigcup_{i=1}^N D_i\). Moreover, if a disc \(D_i\) is disjoint from the remaining discs, then it contains exactly one eigenvalue of \(A\).
\end{theorem}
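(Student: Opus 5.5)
The statement is the Gershgorin disc theorem, cited from Horn--Johnson. I would prove it in two parts: an eigenvector argument for the inclusion, then a continuity argument for the count.

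\emph{Inclusion.} Let \(\lambda\) be an eigenvalue of \(A\) with eigenvector \(x\ne0\). Choose an index \(i\) with \(|x_i|=\max_j|x_j|>0\). The \(i\)-th row of \(Ax=\lambda x\) gives
\[
(\lambda-a_{ii})x_i=\sum_{j\ne i}a_{ij}x_j .
\]
Taking absolute values and dividing by \(|x_i|\) yields \(|\lambda-a_{ii}|\le\sum_{j\ne i}|a_{ij}|\,|x_j|/|x_i|\le R_i\). Hence \(\lambda\in D_i\subseteq\bigcup_i D_i\).

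\emph{Counting.} I would use the homotopy \(A(t)=D+t(A-D)\) for \(t\in[0,1]\), where \(D=\operatorname{diag}(a_{11},\dots,a_{NN})\). The discs of \(A(t)\) have the same centers \(a_{ii}\) and radii \(tR_i\le R_i\). So \(D_i(t)\subseteq D_i\), and \(D_i(t)\) stays disjoint from \(\bigcup_{j\ne i}D_j(t)\subseteq\bigcup_{j\ne i}D_j\) for every \(t\).

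Write \(K=\bigcup_{j\ne i}D_j\), a compact set disjoint from the compact disc \(D_i\). Choose a circle \(\Gamma\) enclosing \(D_i\) and lying in the open set \(\mathbb C\setminus (D_i\cup K)\) while not enclosing any point of \(K\). Concretely, take the circle of radius \(R_i+\varepsilon\) about \(a_{ii}\), with \(\varepsilon\) smaller than the distance from \(D_i\) to \(K\). Points of \(K\) then lie at distance greater than \(R_i+\varepsilon\) from \(a_{ii}\), so they are outside \(\Gamma\).

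By the inclusion part, no eigenvalue of \(A(t)\) lies on \(\Gamma\) for any \(t\). Therefore the number of eigenvalues inside \(\Gamma\) is
\[
n(t)=\frac{1}{2\pi i}\oint_\Gamma\frac{p_t'(z)}{p_t(z)}\,dz,\qquad p_t(z)=\det(zI-A(t)).
\]
This quantity is continuous in \(t\) and integer-valued, hence constant.

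At \(t=0\) the eigenvalues are the diagonal entries \(a_{jj}\). For \(j\ne i\), \(a_{jj}\in D_j\subseteq K\), so it lies outside \(\Gamma\). Thus exactly one eigenvalue, namely \(a_{ii}\), lies inside, and \(n(0)=1\). At \(t=1\), every eigenvalue inside \(\Gamma\) lies in \(\bigcup_j D_j\cap\operatorname{int}\Gamma=D_i\). So \(D_i\) contains exactly one eigenvalue of \(A\), counted with multiplicity.

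The main obstacle is justifying continuity of \(n(t)\). The integrand \(p_t'/p_t\) is continuous in \((t,z)\) on \([0,1]\times\Gamma\), because \(p_t\) has no zeros on the compact set \(\Gamma\). Continuity of the integral in \(t\) then follows, and this Rouché/argument-principle step is the only analytic input needed.
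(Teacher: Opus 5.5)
Your proof is correct; the paper gives no proof of its own and only cites Horn--Johnson and Varga, and your argument is the standard one found there: the maximal-modulus eigenvector component for the inclusion, and the homotopy \(D+t(A-D)\) with radii \(tR_i\) for the count. The only variation is that you prove the continuity step directly, via the argument principle on the circle of radius \(R_i+\varepsilon\), whereas the cited texts invoke continuous dependence of the roots of the characteristic polynomial on the matrix entries; your version is self-contained and counts multiplicity automatically.
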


For a real symmetric matrix \(A\) with eigenvalues \(\lambda_1,\dots,\lambda_N\),
\[
E(A)=\sum_{i=1}^N|\lambda_i|\ge \Bigl|\sum_{i=1}^N\lambda_i\Bigr|
=|\operatorname{tr}(A)|,
\]
with equality if and only if all nonzero eigenvalues have the same sign. In particular, if \(A\) is positive semidefinite, then \(E(A)=\operatorname{tr}(A)\).

\begin{proposition}\label{prop:psd}
If for every vertex \(v\),
\(
g(d_v)\ge \sum_{u\sim v}|f(d_v,d_u)|,
\)
then \(A_{fg}\) is positive semidefinite. Consequently,
\(
E(A_{fg})=\operatorname{tr}(A_{fg})=\sum_{v\in V}g(d_v).
\)
\end{proposition}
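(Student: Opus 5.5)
The plan is to apply the Gershgorin disc theorem (Theorem~\ref{thm:gershgorin}) directly to \(A_{fg}=A_{fg}(G)\), which is real symmetric. Its eigenvalues are therefore real, and each lies in some disc \(D_i\) centred at \(a_{ii}=g(d_i)\).

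The first step is to compute the radius of each disc. In row \(i\) the off-diagonal entries are \(f(d_i,d_j)\) for \(v_j\sim v_i\) and \(0\) otherwise. Hence
\[
R_i=\sum_{u\sim v_i}|f(d_{v_i},d_u)|.
\]

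Next, take an eigenvalue \(\lambda\). Since \(\lambda\) is real and lies in some \(D_i\), we have \(|\lambda-g(d_i)|\le R_i\). This gives \(\lambda\ge g(d_i)-R_i\ge 0\) by the hypothesis. So every eigenvalue is nonnegative, and a real symmetric matrix with nonnegative spectrum is positive semidefinite. Note that the hypothesis with the right-hand side nonnegative forces \(g(d_v)\ge0\), although this is not needed separately.

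For the energy identity, I will use the remark preceding the proposition. Since all eigenvalues are \(\ge0\), we have \(E(A_{fg})=\sum|\lambda_i|=\sum\lambda_i=\operatorname{tr}(A_{fg})\). The trace is the sum of the diagonal entries, which is \(\sum_{v\in V}g(d_v)\).

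There is no real obstacle here. The only point needing care is that Gershgorin gives complex discs, so I must invoke symmetry, and hence the reality of the eigenvalues, to turn \(|\lambda-g(d_i)|\le R_i\) into the one-sided bound \(\lambda\ge g(d_i)-R_i\).
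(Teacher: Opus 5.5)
Your proof is correct and matches the paper's argument: Gershgorin discs centred at \(g(d_v)\) with radii \(\sum_{u\sim v}|f(d_v,d_u)|\), the hypothesis forcing every eigenvalue to be nonnegative, and then energy equal to trace equal to \(\sum_v g(d_v)\). Your explicit use of symmetry to get real eigenvalues is the step the paper uses implicitly when it passes from \(\operatorname{Re} z\ge 0\) to nonnegative eigenvalues.
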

\begin{pf}
By Theorem~\ref{thm:gershgorin}, every eigenvalue of \(A_{fg}\) lies in the union of the discs
\[
D_v=\left\{z:\ |z-g(d_v)|\le R_v\right\},
\qquad
R_v=\sum_{u\sim v}|f(d_v,d_u)|.
\]
If \(g(d_v)\ge R_v\) for every \(v\), then each disc lies in the closed right half-plane, since \(\operatorname{Re} z\ge g(d_v)-R_v\ge 0\). Hence every eigenvalue of \(A_{fg}\) is nonnegative, i.e., \(A_{fg}\) is positive semidefinite. The energy equality then follows from \(E(A)=\sum_i|\lambda_i|=\sum_i\lambda_i=\operatorname{tr}(A)\) for a positive semidefinite matrix.
\qed
\end{pf}

For \(D_{n,k}\) and \(B_{n,k}\), the degree distribution is known, so the trace can be written explicitly. For \(k\ne2\),
\[
\operatorname{tr}(A_{fg}(D_{n,k}))
= g(k)\Bigl[1+k\frac{(k-1)^{n-1}-1}{k-2}\Bigr]+g(1)k(k-1)^{n-1},
\]
\[
\operatorname{tr}(A_{fg}(B_{n,k}))
= g(k-1)+g(k)(k-1)\frac{(k-1)^{n-1}-1}{k-2}+g(1)(k-1)^n.
\]
The case \(k=2\) is obtained by taking the limit or by direct summation.

\begin{corollary}\label{cor:energy-eq}
The following energy equalities hold.
\begin{enumerate}
\item \(L^+\): \(g(d)=d\), \(f=1\). The Gershgorin condition reduces to \(d_v\ge d_v\), so \(L^+\) is positive semidefinite and \(E(L^+)=2|E|\).
\item \(L\): \(g(d)=d\), \(f=-1\). Again \(d_v\ge d_v\), so \(L\) is positive semidefinite and \(E(L)=2|E|\). For bipartite graphs \(L\) and \(L^+\) are cospectral, so the two energies coincide.
\item \(A_\alpha\): \(g(d)=\alpha d\), \(f=1-\alpha\). The Gershgorin condition becomes \(\alpha d_v\ge |1-\alpha|d_v\iff \alpha\ge \tfrac12\). Hence for \(\alpha\ge \tfrac12\),
\[
E(A_\alpha)=\operatorname{tr}(A_\alpha)=\alpha\sum_v d_v=2\alpha|E|.
\]
In particular, at \(\alpha=\tfrac12\), \(A_{1/2}=\tfrac12 L^+\), so \(E(A_{1/2})=|E|\).
\end{enumerate}
\end{corollary}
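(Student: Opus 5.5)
The plan is to derive all three items from Proposition~\ref{prop:psd}. For each matrix we check the Gershgorin condition \(g(d_v)\ge\sum_{u\sim v}|f(d_v,d_u)|\) vertex by vertex, conclude positive semidefiniteness, and then compute the trace. Since the edge weight is constant in all three cases, the row sum off the diagonal at \(v\) is \(|f|\,d_v\). The condition therefore reduces to the scalar inequality \(g(d_v)\ge |f|\,d_v\), which must hold for every vertex degree \(d_v\ge1\) (isolated vertices contribute trivially).

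For \(L^+\) and \(L\) we have \(g(d_v)=d_v\) and \(|f|=1\), so the condition reads \(d_v\ge d_v\), which holds with equality. Hence both matrices are positive semidefinite. The energy is then the trace \(\sum_v d_v=2|E|\) by the handshake lemma. The remark about bipartite graphs uses the standard fact that \(L=SL^+S\) with \(S\) the diagonal \(\pm1\) signature matrix of the bipartition. This is a similarity, so the spectra coincide; in any case both energies equal \(2|E|\) independently.

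For \(A_\alpha\) the condition is \(\alpha d_v\ge|1-\alpha|d_v\), which after dividing by \(d_v>0\) becomes \(\alpha\ge|1-\alpha|\). We split into two cases. If \(\alpha\le1\), it reads \(\alpha\ge1-\alpha\), i.e.\ \(\alpha\ge\frac12\). If \(\alpha>1\), it reads \(\alpha\ge\alpha-1\), which always holds. So the condition is satisfied exactly for \(\alpha\ge\frac12\), assuming the graph has at least one edge. The trace is \(\alpha\sum_v d_v=2\alpha|E|\). At \(\alpha=\frac12\) the matrix is \(\frac12(D+A)=\frac12L^+\), so the energy equals \(|E|\), consistent with item 1.

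There is no real obstacle here. The one point needing care is the wording ``\(\iff\)'': it is an equivalence of the Gershgorin \emph{condition}, not of positive semidefiniteness itself. \(A_\alpha\) may well be positive semidefinite for some \(\alpha<\frac12\), for instance on bipartite graphs where the smallest eigenvalue can remain nonnegative. The proof should therefore state only the sufficient direction for the energy formula.
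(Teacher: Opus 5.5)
Your proposal is correct and follows the paper's own route. You verify the Gershgorin condition of Proposition~\ref{prop:psd}, which for a constant edge weight reduces to \(g(d_v)\ge|f|\,d_v\), conclude positive semidefiniteness, and read off the energy as the trace via the handshake lemma.

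One correction to your closing aside, which the proof does not need. Bipartite graphs with at least one edge are exactly where \(A_\alpha\) fails to be positive semidefinite for \(\alpha<\tfrac12\): the \(\pm1\) bipartition vector \(x\) gives \(x^TA_\alpha x=2|E|(2\alpha-1)<0\). So on the trees \(D_{n,k}\) and \(B_{n,k}\), the Gershgorin condition is in fact equivalent to positive semidefiniteness. A genuine example of semidefiniteness below \(\tfrac12\) must be non-bipartite, e.g.\ \(K_m\) with \(m\ge3\) and \(\tfrac1m\le\alpha<\tfrac12\), whose \(A_\alpha\)-eigenvalues are \(m-1\) and \(\alpha m-1\).
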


For \(D_{n,k}\) and \(B_{n,k}\), \(|E|=|V|-1\), so the above formulas give closed-form energies for \(L\), \(L^+\), and \(A_\alpha\) with \(\alpha\ge \tfrac12\), without any root computation. Adjacency, Randić, ABC, and Sombor matrices: \(g=0\), hence \(\operatorname{tr}(A_{fg})=0\) and the energy is not determined by the trace. These are exactly the cases where the Chebyshev boundary equations of Proposition~\ref{prop:cheb-roots} are needed when the Chebyshev reduction applies, and otherwise the general formula of Lemma~\ref{lem:Pfg-explicit} must be used.

\begin{remark}
For \(L^{+}\) and \(L\), the energy equals \(2|E|\) by positive semidefiniteness,
as shown in Corollary~\ref{cor:energy-eq}. The Chebyshev equations in
Corollaries~\ref{cor:D-L+} and~\ref{cor:B-L+} locate the eigenvalues and give
their multiplicities, but they do not by themselves provide a complete explicit
sum over \(\theta\in(0,\pi)\), because some roots may correspond to \(|x|>1\).
For the energy, the trace argument is sufficient. 
\end{remark}

\section{Spectral Gaps and Interlacing}

The factorizations in Theorems~\ref{thm:D} and~\ref{thm:B} also allow us to study how the spectrum changes when the tree is modified. In this section we derive spectral gaps between the two largest eigenvalues and establish interlacing inequalities under natural operations.

\subsection{Spectral gap between the two largest eigenvalues}

Let \(r=k-1\) and \(s=\sqrt r\). For \(B_{n,k}\), when \(k>2\), the adjacency eigenvalues are
\(\lambda=2s\cos\frac{\ell\pi}{m+2}\) (\(m=0,\dots,n\), \(\ell=1,\dots,m+1\)). Hence
\(\lambda_{\max}=2s\cos\frac{\pi}{n+2}\), \(\lambda_2=2s\cos\frac{\pi}{n+1}\), and
\[
\lambda_{\max}-\lambda_2
=4s\sin\frac{\pi(2n+3)}{2(n+1)(n+2)}
\sin\frac{\pi}{2(n+1)(n+2)}=O(n^{-3}).
\]
For \(k=2\), \(B_{n,2}\) is the path \(P_{n+1}\), so
\(\lambda_{\max}=2\cos\frac{\pi}{n+2}\), \(\lambda_2=2\cos\frac{2\pi}{n+2}\), and
\[
\lambda_{\max}-\lambda_2
=4\sin\frac{3\pi}{2(n+2)}\sin\frac{\pi}{2(n+2)}=O(n^{-2}).
\]

For \(D_{n,k}\), the adjacency end factor is
\[
P_{A,n}-P_{A,n-2}=s^{n-1}\bigl((k-1)U_{n+1}(x_A)-U_{n-1}(x_A)\bigr),\quad x_A=\frac{\lambda}{2s}.
\]
Thus \(\lambda_{\max}(A(D_{n,k}))\) is the largest root of
\((k-1)U_{n+1}(x_A)=U_{n-1}(x_A)\), so no equidistant cosine formula is available in general.

For \(L^+\), put \(x_L=(\lambda-k)/(2s)\). The eigenvalues of \(L^+(D_{n,k})\) are the union of the roots of
\(\lambda=0\), \(sU_n(x_L)=U_{n-1}(x_L)\), \(U_n(x_L)+sU_{n-1}(x_L)=0\), and
\(U_{j+1}(x_L)+sU_j(x_L)=0\) (\(j=0,\dots,n-2\)). They cannot be unified into one Chebyshev boundary equation. If \(x_{\max}\) and \(x_2\) are the two largest \(x_L\)-values among all these equations, then
\(\lambda_{\max}(L^+)-\lambda_2(L^+)=2s(x_{\max}-x_2)>0\) by the Perron--Frobenius theorem~\cite[Theorem 8.4.4]{HornJohnson2013}; a sharp bound requires numerical solution.

For \(B_{n,k}\), the \(L^+\) end factor is
\(H_{B,L}=r^{n/2}\lambda U_n(x_L)\), whose roots are \(\lambda=0\) and
\(x_L=\cos\frac{\ell\pi}{n+1}\) (\(\ell=1,\dots,n\)), while the remaining factors satisfy
\(U_{j+1}(x_L)+sU_j(x_L)=0\) (\(j=0,\dots,n-1\)). The spectral gap is again positive but generally not explicit.

\subsection{Interlacing under changing the number of layers}

Deleting the outermost layer of \(D_{n,k}\) yields \(D_{n-1,k}\), and similarly \(B_{n,k}\) yields \(B_{n-1,k}\). For the adjacency matrix, this is taking a principal submatrix, so Cauchy interlacing theorem~\cite[Theorem 4.3.17]{HornJohnson2013} gives
\[
\lambda_i(A(D_{n,k}))\ge \lambda_i(A(D_{n-1,k}))\ge \lambda_{i+1}(A(D_{n,k})).
\]
For a general \(A_{fg}\), however, the degrees of the new last layer change from \(k\) to \(1\), so \(g(d_v)\) and \(f(d_i,d_j)\) change, and \(A_{fg}(D_{n-1,k})\) is not a principal submatrix of \(A_{fg}(D_{n,k})\). Interlacing is guaranteed only when both \(g\) and \(f\) are degree-independent, i.e. \(A_{fg}=cI+c'A\); this covers the adjacency matrix and its scalar multiples, but not the Laplacian, signless Laplacian, \(A_\alpha\), ABC, Randić, or Randić-type matrices.

\subsection{Interlacing under changing the branching parameter}

Changing the branching parameter from \(k\) to \(k-1\) transforms \(D_{n,k}\) into \(D_{n,k-1}\), and analogously \(B_{n,k}\) into \(B_{n,k-1}\). Under this operation the degree of every non-leaf vertex decreases by one, so the diagonal entries \(g(d_v)\) and the edge weights \(f(d_i,d_j)\) are affected, and \(A_{fg}(D_{n,k-1})\) is no longer a principal submatrix of \(A_{fg}(D_{n,k})\). The Cauchy interlacing theorem therefore does not apply, and the interlacing inequalities need not hold in general.

A representative counterexample is provided by the signless Laplacian matrix \(L^{+}\) on \(D_{2,3}\) and \(D_{2,2}\). A direct computation gives
\[
\lambda_1(L^{+}(D_{2,3}))=5,\qquad
\lambda_2(L^{+}(D_{2,3}))=2+\sqrt{3}\approx 3.732,
\qquad
\text{and}
\qquad
\lambda_1(L^{+}(D_{2,2}))=\frac{5+\sqrt{5}}{2}\approx 3.618.
\]
If the interlacing inequalities held, we would have
\[
\lambda_2(L^{+}(D_{2,3}))\le \lambda_1(L^{+}(D_{2,2}))\le \lambda_1(L^{+}(D_{2,3})).
\]
But \(2+\sqrt{3}\approx 3.732>3.618\approx\frac{5+\sqrt{5}}{2}\), so the left inequality fails. Hence the interlacing property does not hold when the branching parameter is changed, even for the signless Laplacian matrix.

\section{Fewer Distinct Eigenvalues}
The Chebyshev form allows us to describe the possible coincidences of eigenvalues. For the adjacency matrix, the roots are Chebyshev zeros; for a general \(A_{fg}\), the roots of different factors \(P_{fg,j}\) rarely coincide.

\subsection{Adjacency matrix}
For the adjacency matrix, \(g\equiv0\) and \(f\equiv1\), so \(c=d=k-1\) and \(P_{A,j}=s^{j+1}U_{j+1}(x_A)\), where \(x_A=\lambda/(2s)\) and \(s=\sqrt{k-1}\). Thus \(P_{A,j}=0\iff U_{j+1}(x_A)=0\), whose roots are \(\lambda=2\sqrt{k-1}\cos(\ell\pi/(j+2))\), \(\ell=1,\dots,j+1\). These are equidistant cosine points.

For \(D_{n,k}\), the end factor satisfies \(P_{A,n}-P_{A,n-2}=s^{n-1}((k-1)U_{n+1}(x_A)-U_{n-1}(x_A))\), so its roots are not simply equidistant cosine points, except in special cases. For \(B_{n,k}\) with \(k>2\), the end factor reduces to \(P_{A,n}=s^{n+1}U_{n+1}(x_A)\), so the entire characteristic polynomial is a product of \(P_{A,j}\) factors. Consequently, all adjacency eigenvalues of \(B_{n,k}\) are \(\lambda=2\sqrt{k-1}\cos(\ell\pi/(m+2))\), \(m=0,1,\dots,n\), \(\ell=1,\dots,m+1\). For \(k=2\), \(B_{n,2}\) is the path \(P_{n+1}\), and the adjacency eigenvalues are \(\lambda=2\cos(\ell\pi/(n+2))\), \(\ell=1,\dots,n+1\). However, different values of \(m\) may produce the same eigenvalue; for example, \(\lambda=0\) occurs for several \(m\) when \(k>2\). Hence the number of distinct eigenvalues is obtained by merging these coincidences and is not simply \(n+2\) in general.

\subsection{General case}
For a general \(A_{fg}\), the roots of \(P_{fg,j}\) are given by the general explicit formula in Lemma~\ref{lem:Pfg-explicit}. When \(c=d\), i.e. \(f^2(1,k)=f^2(k,k)\), put \(r=c=d\), \(s=\sqrt r\), \(x=(\lambda-g(k))/(2s)\), \(\delta=(g(k)-g(1))/s\). Then \(P_{fg,j}=s^{j+1}(U_{j+1}(x)+\delta U_j(x))\), so the roots are reduced to the Chebyshev boundary equation \(U_{j+1}(x)+\delta U_j(x)=0\). For \(L/L^+\), \(\delta=s\), recovering \(U_{j+1}(x)+sU_j(x)=0\); for \(A_\alpha\), \(\delta=\delta_\alpha=\alpha\sqrt r/\beta\), recovering \(U_{j+1}(x_\alpha)+\delta_\alpha U_j(x_\alpha)=0\). The ABC, Randić, Sombor and other matrices generally do not satisfy \(f^2(1,k)=f^2(k,k)\), so the Chebyshev equation cannot be applied to them. An exception is the ABC matrix when \(k=2\), where \(f^2(1,k)=f^2(k,k)=1/2\). Two different factors \(P_{fg,l}\) and \(P_{fg,m}\) share a root only when the corresponding equations have a common solution. For generic parameters this does not happen, and the number of distinct eigenvalues is the sum of the numbers of distinct roots of the factors. A complete classification of all possible coincidences is beyond the scope of this paper.

\begin{remark}\label{rmk:few-eigen}
In the nondegenerate case \(s\ne0\) and \(x\in(-1,1)\), the roots of different factors are determined by different Chebyshev boundary equations. Coincidences can occur only when these equations admit a common solution; this requires special algebraic relations among the parameters. For the adjacency matrix, the coincidences are exactly those of the Chebyshev zeros \(U_{j+1}(x)=0\).
\end{remark}

\section{Bounds of Energy}
The exact computation of \(E(A_{fg})\) requires all eigenvalues, which is often impractical for large trees. It is therefore useful to have upper bounds in terms of easily computable quantities such as the number of vertices, the number of edges, and the traces \(\operatorname{tr}(A_{fg})\) and \(\operatorname{tr}(A_{fg}^2)\). In this section we recall two classical bounds for graph energy and adapt them to the weighted adjacency-degree matrix \(A_{fg}\).

\subsection{McClelland bound}
The first general upper bound for graph energy was obtained by McClelland in 1971~\cite{McClelland1971}. For any graph \(G\) on \(N\) vertices, \(E(G)\le \sqrt{N\operatorname{tr}(A^2)}\). The same argument applies to \(A_{fg}\).

\begin{theorem}\label{thm:energy-bound}
For any graph \(G\) and any real symmetric \(A_{fg}\),
\[
E(A_{fg})\le \sqrt{N\operatorname{tr}(A_{fg}^2)},\qquad N=|V|.
\]
\end{theorem}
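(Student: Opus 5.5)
The plan is to follow McClelland's original argument verbatim, replacing the adjacency matrix by \(A_{fg}\). The only structural input is that \(A_{fg}\) is real symmetric: \(f\) is symmetric, so the \((i,j)\) and \((j,i)\) entries agree, and the diagonal \(g(d_i)\) is real. By the spectral theorem, \(A_{fg}\) therefore has real eigenvalues \(\lambda_1,\dots,\lambda_N\) and is orthogonally diagonalizable. Consequently \(\lambda_1^2,\dots,\lambda_N^2\) are the eigenvalues of \(A_{fg}^2\), and
\[
\sum_{i=1}^N \lambda_i^2=\operatorname{tr}(A_{fg}^2).
\]

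The key step is the Cauchy--Schwarz inequality applied to the vectors \((|\lambda_1|,\dots,|\lambda_N|)\) and \((1,\dots,1)\) in \(\mathbb{R}^N\):
\[
E(A_{fg})=\sum_{i=1}^N|\lambda_i|\le \sqrt{N}\Bigl(\sum_{i=1}^N\lambda_i^2\Bigr)^{1/2}=\sqrt{N\operatorname{tr}(A_{fg}^2)}.
\]
Equivalently, one can expand \(\sum_{i,j}(|\lambda_i|-|\lambda_j|)^2\ge0\). Either route gives the claim at once.

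For concreteness one may also record
\[
\operatorname{tr}(A_{fg}^2)=\sum_{i,j}a_{ij}^2=\sum_{v}g(d_v)^2+2\sum_{uv\in E}f(d_u,d_v)^2,
\]
since \(A_{fg}^2\) has diagonal entries \(\sum_j a_{ij}^2\). This makes the bound explicitly computable from degree data, for instance for \(D_{n,k}\) and \(B_{n,k}\) using the level counts of Section~2. It is not needed for the inequality itself.

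There is no real obstacle here. The only point requiring care is to insist on symmetry, which is where \(f(x,y)=f(y,x)\) is used; without it the eigenvalues could be complex, and \(\operatorname{tr}(A^2)\) would no longer equal \(\sum|\lambda_i|^2\). I would also note the equality case from Cauchy--Schwarz: equality holds iff all \(|\lambda_i|\) are equal.
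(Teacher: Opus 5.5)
Your proof is correct and is essentially the paper's argument: Cauchy--Schwarz applied to \((|\lambda_1|,\dots,|\lambda_N|)\) and \((1,\dots,1)\), together with \(\sum_i\lambda_i^2=\operatorname{tr}(A_{fg}^2)\) from the spectral theorem for the real symmetric \(A_{fg}\). Your extra remarks are also accurate: the formula \(\operatorname{tr}(A_{fg}^2)=\sum_v g(d_v)^2+2\sum_{uv\in E}f(d_u,d_v)^2\), and the equality case (all \(|\lambda_i|\) equal).
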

\begin{pf}
By the Cauchy--Schwarz inequality applied to the eigenvalues of \(A_{fg}\),
\[
\Bigl(\sum_{i=1}^N|\lambda_i|\Bigr)^2
\le N\sum_{i=1}^N\lambda_i^2
=N\operatorname{tr}(A_{fg}^2).
\]
Taking square roots gives the claim.
\qed
\end{pf}

For \(D_{n,k}\) and \(B_{n,k}\), the second spectral moment can be computed explicitly: \(\operatorname{tr}(A_{fg}^2(D_{n,k}))=\sum_{v} g(d_v)^2+2\sum_{v_i v_j\in E(D_{n,k})} f^2(d_i,d_j)\), and similarly for \(B_{n,k}\). This yields concrete McClelland-type upper bounds for the energies of \(L^+\), \(A_\alpha\), and other matrices in the unified framework.

\subsection{Koolen--Moulton bound}
A sharper bound was obtained by Koolen and Moulton in 2001~\cite{KoolenMoulton2001}. For any graph \(G\) on \(N\) vertices with \(M\) edges,
\[
E(G)\le \frac{2M}{N}
+\sqrt{(N-1)\Bigl(2M-\Bigl(\frac{2M}{N}\Bigr)^2\Bigr)}.
\]
In the \(A_{fg}\) framework, the analogous bound reads as follows.

\begin{theorem}\label{thm:koolen-moulton}
For any graph \(G\) and any real symmetric \(A_{fg}\) with \(g\ge0\) and \(f\ge0\), assume that \(T^2\ge NC\), where
\[
N=|V|,\qquad
T=\operatorname{tr}(A_{fg})+2S,\qquad
C=\operatorname{tr}(A_{fg}^2),\qquad
S=\sum_{v_i v_j\in E} f(d_i,d_j).
\]
Then
\[
E(A_{fg})\le \frac{T}{N}
+\sqrt{(N-1)\Bigl(C-\Bigl(\frac{T}{N}\Bigr)^2\Bigr)}.
\]
\end{theorem}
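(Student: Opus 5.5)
The proof follows the classical Koolen--Moulton argument. We separate the largest eigenvalue, bound it from below by a Rayleigh quotient, and control the remaining eigenvalues by Cauchy--Schwarz as in Theorem~\ref{thm:energy-bound}. Let \(\lambda_1\ge\lambda_2\ge\dots\ge\lambda_N\) be the eigenvalues of the real symmetric matrix \(A_{fg}\).

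\emph{Step 1: a lower bound for \(\lambda_1\).} Take the all-ones vector \(\mathbf 1\). Summing all entries of \(A_{fg}\) gives
\[
\mathbf 1^{T}A_{fg}\mathbf 1=\sum_v g(d_v)+2\sum_{v_iv_j\in E}f(d_i,d_j)=\operatorname{tr}(A_{fg})+2S=T.
\]
The Rayleigh quotient then gives \(\lambda_1\ge T/N\). Since \(f,g\ge0\), we have \(T\ge0\), so \(\lambda_1\ge0\) and \(|\lambda_1|=\lambda_1\). We also have \(\lambda_1^2\le\sum_i\lambda_i^2=C\), so \(\lambda_1\le\sqrt C\).

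\emph{Step 2: the remaining eigenvalues.} Apply Cauchy--Schwarz to \(\lambda_2,\dots,\lambda_N\), exactly as in the proof of Theorem~\ref{thm:energy-bound}:
\[
\sum_{i\ge2}|\lambda_i|\le\sqrt{(N-1)\sum_{i\ge2}\lambda_i^2}=\sqrt{(N-1)(C-\lambda_1^2)}.
\]
Hence \(E(A_{fg})\le F(\lambda_1)\), where
\[
F(x)=x+\sqrt{(N-1)(C-x^2)},\qquad 0\le x\le\sqrt C.
\]

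\emph{Step 3: monotonicity of \(F\).} This is the step needing care, since it is where the hypothesis \(T^2\ge NC\) enters. We have
\[
F'(x)=1-\frac{(N-1)x}{\sqrt{(N-1)(C-x^2)}}.
\]
Then \(F'(x)\le0\) if and only if \((N-1)x^2\ge C-x^2\), i.e. \(x\ge\sqrt{C/N}\). So \(F\) is nonincreasing on \([\sqrt{C/N},\sqrt C]\).

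The assumption \(T^2\ge NC\), together with \(T\ge0\), gives \(T/N\ge\sqrt{C/N}\). Combined with Step 1, both \(T/N\) and \(\lambda_1\) lie in this interval and satisfy \(T/N\le\lambda_1\). Therefore
\[
E(A_{fg})\le F(\lambda_1)\le F(T/N)=\frac TN+\sqrt{(N-1)\Bigl(C-\Bigl(\frac TN\Bigr)^2\Bigr)}.
\]
Note also that \(C-(T/N)^2\ge0\) because \(T/N\le\lambda_1\le\sqrt C\), so the square root is real.

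I expect the main obstacle to be Step 3. One must check that \(T/N\) falls in the region where \(F\) decreases; without \(T^2\ge NC\) the comparison \(F(\lambda_1)\le F(T/N)\) can fail. One should also remark that the nonnegativity assumptions on \(f,g\) are used only to ensure \(T\ge0\), so that \(\lambda_1\ge0\) and the square root of \(T^2\ge NC\) can be taken with the correct sign.
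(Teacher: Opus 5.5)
Your proof is correct and follows the paper's argument essentially step for step: the Rayleigh quotient with \(\mathbf 1\) gives \(\lambda_1\ge T/N\ge 0\), Cauchy--Schwarz handles \(\lambda_2,\dots,\lambda_N\), and the function \(x+\sqrt{(N-1)(C-x^2)}\) is nonincreasing on \([\sqrt{C/N},\sqrt C]\). The only cosmetic difference is that you obtain \(T/N\le\sqrt C\) from \(T/N\le\lambda_1\le\sqrt C\), whereas the paper derives it from the entrywise Cauchy--Schwarz bound \(T^2\le N^2C\).
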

\begin{pf}
Let \(\lambda_1\ge\lambda_2\ge\cdots\ge\lambda_N\) be the eigenvalues of \(A_{fg}\). Since \(A_{fg}\) is nonnegative and nonzero, \(\lambda_1>0\). Write \(E(A_{fg})=\lambda_1+\sum_{i=2}^N|\lambda_i|\). By the Cauchy--Schwarz inequality applied to the \((N-1)\)-tuple \((\lambda_2,\dots,\lambda_N)\),
\[
\sum_{i=2}^N|\lambda_i|
\le\sqrt{(N-1)\sum_{i=2}^N\lambda_i^2}
=\sqrt{(N-1)\Bigl(C-\lambda_1^2\Bigr)}.
\]
Hence \(E(A_{fg})\le \varphi(\lambda_1)\), where \(\varphi(x):=x+\sqrt{(N-1)(C-x^2)}\). Next we bound \(\lambda_1\) from below. By the Rayleigh quotient characterization of the largest eigenvalue of a symmetric matrix~\cite[Theorem 4.2.2]{HornJohnson2013}, taking the unit vector \(\mathbf{u}=\mathbf{1}/\sqrt{N}\) gives
\[
\lambda_1\ge \mathbf{u}^TA_{fg}\mathbf{u}
=\frac{1}{N}\sum_{i,j}(A_{fg})_{ij}
=\frac{\operatorname{tr}(A_{fg})+2S}{N}
=\frac{T}{N}.
\]
Since \(g\ge0\) and \(f\ge0\), all entries of \(A_{fg}\) are nonnegative, so \(T\ge0\). We now show that \(\varphi\) is decreasing on \([\sqrt{C/N},\sqrt{C}]\). Indeed,
\[
\varphi'(x)=1-\frac{(N-1)x}{\sqrt{(N-1)(C-x^2)}},
\]
and \(\varphi'(x)\le0\) iff \((N-1)(C-x^2)\le (N-1)^2x^2\iff C\le Nx^2\iff x\ge\sqrt{C/N}\). Thus \(\varphi\) is decreasing on \([\sqrt{C/N},\sqrt{C}]\). By Cauchy--Schwarz,
\[
T^2=\Bigl(\sum_{i,j}(A_{fg})_{ij}\Bigr)^2
\le N^2\sum_{i,j}(A_{fg})_{ij}^2=N^2C,
\]
so \(T/N\le\sqrt C\). Hence \(T/N\in[\sqrt{C/N},\sqrt{C}]\), and the assumption \(T^2\ge NC\) gives \(T/N\ge\sqrt{C/N}\). Therefore \(\varphi(\lambda_1)\le \varphi(T/N)\). Substituting the definition of \(\varphi\),
\[
E(A_{fg})\le \frac{T}{N}
+\sqrt{(N-1)\Bigl(C-\Bigl(\frac{T}{N}\Bigr)^2\Bigr)}.
\]
This proves the claim.
\qed
\end{pf}

For \(D_{n,k}\) and \(B_{n,k}\), the quantities \(N\), \(\operatorname{tr}(A_{fg})\), \(\operatorname{tr}(A_{fg}^2)\), and \(S\) are all explicit. Therefore, whenever the hypothesis \(T^2\ge NC\) is satisfied, the Koolen--Moulton bound can be written in closed form. This hypothesis is not automatic. For example, for \(L^+(B_{1,k})\), which is the signless Laplacian of the star \(K_{1,k-1}\), one has \(N=k\), \(T=4(k-1)\), \(C=(k-1)(k+2)\). Hence \(T^2\ge NC\) is equivalent to \(16(k-1)\ge k(k+2)\iff k^2-14k+16\le0\), which fails for every integer \(k>12\). Thus the bound must be applied only after verifying \(T^2\ge NC\); for larger \(k\) or deeper trees, a direct spectral computation or another bound may be necessary.

\section{Conclusion}

We introduced a unified weighted adjacency-degree matrix \(A_{fg}\) that specializes to adjacency, Laplacian, signless Laplacian, \(A_\alpha\), ABC, Randi\'c, Sombor and related matrices. For \(D_{n,k}\) and \(B_{n,k}\), the characteristic polynomial factorizes through \(P_{fg,n}\), yielding a common route to spectra, multiplicities, energies, and bounds.

The results are conditional in an essential way. A Chebyshev reduction to \(U_{j+1}(x)+\delta U_j(x)=0\) holds when \(f^2(1,k)=f^2(k,k)\), covering \(L/L^+\) and \(A_\alpha\); otherwise the general formula for \(P_{fg,n}\) is required. Explicit cosine formulas are exceptional, occurring for the adjacency matrix and the \(L/L^+\) end factor of \(B_{n,k}\). For positive semidefinite matrices the energy equals the trace; in particular,
\[
E(L)=E(L^+)=2|E|,
\qquad
E(A_\alpha)=2\alpha |E|\quad(\alpha\ge 1/2).
\]
For non-positive-semidefinite matrices such as ABC and Randi\'c, root analysis remains necessary.

The framework also delineates limitations: interlacing under deleting the outer layer holds only for \(A_{fg}=cI+c'A\), and interlacing under changing \(k\) may fail even for \(L^+\). A complete classification of eigenvalue coincidences for general \(f,g\), closed-form energies for non-positive-semidefinite matrices, sharper spectral-gap bounds, and extensions to other recursively defined trees remain open.

\section*{Declaration of competing interest}
We declare that we have no financial and personal relationships with other people or organizations that can inappropriately influence our work.

\section*{Acknowledgements}
This work is supported by National Natural Science Foundation of China (Grant Number: 12661073) and Natural Science Foundation of Xinjiang (Grant Number: 2024D01A89).

\section*{Data availability}
No data was used for the research described in the article.

\bibliographystyle{cas-model2-names}
\bibliography{References}

@article{Astruc2010,
  author  = {Astruc, Didier and Boisselier, Elodie and Ornelas, C{\'a}tia},
  title   = {Dendrimers designed for functions: from physical, and supramolecular properties to applications in sensing, catalysis, molecular electronics, and nanomedicine},
  journal = {Chem. Rev.},
  year    = {2010},
  volume  = {110},
  pages   = {1857--1959},
  doi     = {10.1021/cr900327d}
}

@article{Boesch1986,
  author  = {Boesch, F. T. and Prodinger, H.},
  title   = {Spanning tree formulas and {C}hebyshev polynomials},
  journal = {Graphs Combin.},
  year    = {1986},
  volume  = {2},
  number  = {1},
  pages   = {191--200},
  doi     = {10.1007/BF01788093}
}

@article{Bokhary2022,
  author  = {Bokhary, Syed Ahtsham Ul Haq and Tabassum, Hafsah},
  title   = {The energy of some tree dendrimers},
  journal = {J. Appl. Math. Comput.},
  year    = {2022},
  volume  = {68},
  number  = {2},
  pages   = {1033--1045},
  doi     = {10.1007/s12190-021-01531-y}
}

@article{Damnjanovic2023,
  author  = {Damnjanovi{\'c}, Ivan and Filipovski, Slobodan and Stevanovi{\'c}, Dragan},
  title   = {Spectral properties of balanced trees and dendrimers},
  journal = {Linear Algebra Appl.},
  year    = {2023},
  volume  = {657},
  pages   = {163--196},
  doi     = {10.1016/j.laa.2022.10.020}
}

@article{Das2018,
  author  = {Das, Kinkar Ch. and Gutman, Ivan and Milovanovi{\'c}, Igor and Milovanovi{\'c}, Emina and Furtula, Boris},
  title   = {Degree-based energies of graphs},
  journal = {Linear Algebra Appl.},
  year    = {2018},
  volume  = {554},
  pages   = {185--204},
  doi     = {10.1016/j.laa.2018.05.027}
}

@article{Gutman1978,
  author  = {Gutman, Ivan},
  title   = {The energy of a graph},
  journal = {Ber. Math.-Statist. Sekt. Forschungsz. Graz},
  year    = {1978},
  volume  = {103},
  pages   = {1--22}
}

@article{Gutman2013,
  author  = {Gutman, Ivan},
  title   = {Degree-based topological indices},
  journal = {Croat. Chem. Acta},
  year    = {2013},
  volume  = {86},
  pages   = {351--361},
  doi     = {10.5562/cca2294}
}

@article{Gutman2021Sombor,
  author  = {Gutman, Ivan},
  title   = {Geometric approach to degree-based topological indices: {S}ombor indices},
  journal = {MATCH Commun. Math. Comput. Chem.},
  year    = {2021},
  volume  = {86},
  pages   = {11--16}
}

@article{GutmanFurtula2017,
  author  = {Gutman, Ivan and Furtula, Boris},
  title   = {Survey of graph energies},
  journal = {Math. Interdiscip. Res.},
  year    = {2017},
  volume  = {2},
  pages   = {85--129}
}

@book{HornJohnson2013,
  author    = {Horn, Roger A. and Johnson, Charles R.},
  title     = {Matrix Analysis},
  edition   = {second},
  publisher = {Cambridge University Press},
  address   = {Cambridge},
  year      = {2013},
  doi       = {10.1017/CBO9781139020411}
}

@article{HuangMetsidik2024,
  author  = {Huang, Mengqi and Metsidik, Metrose},
  title   = {On {$A_{\alpha}$}, {L}aplacian and {ABC} energies of {D}endrimer and {B}ethe trees},
  journal = {Filomat},
  year    = {2024},
  volume  = {38},
  number  = {33},
  pages   = {11701--11710},
  doi     = {10.2298/FIL2433017H}
}

@article{KoolenMoulton2001,
  author  = {Koolen, Jack H. and Moulton, Vincent},
  title   = {Maximal energy graphs},
  journal = {Adv. Appl. Math.},
  year    = {2001},
  volume  = {26},
  number  = {1},
  pages   = {47--52},
  doi     = {10.1006/aama.2000.0705}
}

@article{Kwon2025,
  author  = {Kwon, Y. S. and Mednykh, A. D. and Mednykh, I. A.},
  title   = {On the structure of {L}aplacian characteristic polynomial for circulant foliation},
  journal = {Discrete Appl. Math.},
  year    = {2025},
  volume  = {375},
  pages   = {338--349},
  doi     = {10.1016/j.dam.2025.08.018}
}

@book{LiShiGutman2012,
  author    = {Li, Xueliang and Shi, Yongtang and Gutman, Ivan},
  title     = {Graph Energy},
  publisher = {Springer},
  address   = {New York, Heidelberg, Dordrecht, London},
  year      = {2012},
  doi       = {10.1007/978-1-4614-4205-9}
}

@article{LiWang2021,
  author  = {Li, Xueliang and Wang, Zhiqian},
  title   = {Trees with extremal spectral radius of weighted adjacency matrices among trees weighted by degree-based indices},
  journal = {Linear Algebra Appl.},
  year    = {2021},
  volume  = {620},
  pages   = {61--75},
  doi     = {10.1016/j.laa.2021.02.023}
}

@article{LiYang2024,
  author  = {Li, Xueliang and Yang, Ning},
  title   = {Unified approach for spectral properties of weighted adjacency matrices for graphs with degree-based edge-weights},
  journal = {Linear Algebra Appl.},
  year    = {2024},
  volume  = {696},
  pages   = {46--67},
  doi     = {10.1016/j.laa.2024.05.006}
}

@article{McClelland1971,
  author  = {McClelland, B. J.},
  title   = {Properties of the latent roots of a matrix: {T}he estimation of {$\pi$}-electron energies},
  journal = {J. Chem. Phys.},
  year    = {1971},
  volume  = {54},
  number  = {2},
  pages   = {640--643},
  doi     = {10.1063/1.1674889}
}

@article{Robbiano2008,
  author  = {Robbiano, Mar{\'i}a and Gutman, Ivan and Jim{\'e}nez, R. and San Mart{\'i}n, B.},
  title   = {Spectra of copies of {B}ethe trees attached to path and applications},
  journal = {Bull. Cl. Sci. Math. Nat. Sci. Math.},
  year    = {2008},
  volume  = {33},
  pages   = {59--81}
}

@article{Robbiano2010,
  author  = {Robbiano, Mar{\'i}a and Trevisan, Vilmar},
  title   = {Applications of recurrence relations for the characteristic polynomials of {B}ethe trees},
  journal = {Comput. Math. Appl.},
  year    = {2010},
  volume  = {59},
  number  = {9},
  pages   = {3039--3044},
  doi     = {10.1016/j.camwa.2010.02.023}
}

@article{Rojo2007,
  author  = {Rojo, Oscar and Robbiano, Mar{\'i}a},
  title   = {An explicit formula for eigenvalues of {B}ethe trees and upper bounds on the largest eigenvalue of any tree},
  journal = {Linear Algebra Appl.},
  year    = {2007},
  volume  = {427},
  number  = {1},
  pages   = {138--150},
  doi     = {10.1016/j.laa.2007.07.007}
}

@misc{Tamura2026,
  author        = {Tamura, Shunya},
  title         = {Chebyshev recurrence structures for reduced spectral functions of cyclic circulant graphs},
  year          = {2026},
  eprint        = {2607.29321},
  archivePrefix = {arXiv},
  primaryClass  = {math.CO},
  doi           = {10.48550/arXiv.2607.29321},
  howpublished  = {arXiv preprint}
}

@book{Varga2004,
  author    = {Varga, Richard S.},
  title     = {Ger{\v{s}}gorin and His Circles},
  series    = {Springer Series in Computational Mathematics},
  volume    = {36},
  publisher = {Springer},
  address   = {Berlin, Heidelberg},
  year      = {2004},
  doi       = {10.1007/978-3-642-17798-9}
}

@article{XuYan2022,
  author  = {Xu, Hengmeng and Yan, Weigen},
  title   = {On eigenvalues and the energy of dendrimer trees},
  journal = {Appl. Math. Comput.},
  year    = {2022},
  volume  = {424},
  pages   = {127051},
  doi     = {10.1016/j.amc.2022.127051}
}

\end{document}